\titleformat{\chapter}[display]
{\normalfont\huge\bfseries}{\chaptertitlename\\thechapter}{20pt}{\Huge}
\titleformat{\subsubsection}[runin]
{\normalfont\normalsize\bfseries}{\thesubsubsection}{1em}{}
\titleformat{\paragraph}[runin]
{\normalfont\normalsize\bfseries}{\theparagraph}{1em}{}
\titleformat{\subparagraph}[runin]
{\normalfont\normalsize\bfseries}{\thesubparagraph}{1em}{}
\titlespacing*{\chapter} {0pt}{50pt}{40pt}
\titlespacing*{\section} {0pt}{3.5ex plus 1ex minus .2ex}{2.3ex plus .2ex}
\titlespacing*{\subsection} {0pt}{3.25ex plus 1ex minus .2ex}{1.5ex plus .2ex}
\titlespacing*{\subsubsection}{0pt}{3.25ex plus 1ex minus .2ex}{1.5ex plus .2ex}
\titlespacing*{\paragraph} {0pt}{3.25ex plus 1ex minus .2ex}{1em}
\titlespacing*{\subparagraph} {\parindent}{3.25ex plus 1ex minus .2ex}{1em}
\subjclass[2010]{Primary 16W20, 14R15; Secondary 16S32}
\newtheorem{theorem}{Theorem}[section]
\newtheorem{lemma}[theorem]{Lemma}
\newtheorem{proposition}[theorem]{Proposition}
\newtheorem{corollary}[theorem]{Corollary}
\newtheorem{conjecture}[theorem]{Conjecture}
\theoremstyle{definition}
\newtheorem{definition}[theorem]{Definition}
\newtheorem{example}[theorem]{Example}
\theoremstyle{remark}
\newtheorem{remark}[theorem]{Remark}
\DeclareMathOperator{\Jac}{Jac}
\DeclareMathOperator{\Img}{Img}
\begin{document}
\title{Observations on the two dimensional Jacobian Conjecture}
\author{Vered Moskowicz}
\address{Department of Mathematics, Bar-Ilan University, Ramat-Gan 52900, Israel.}
\email{vered.moskowicz@gmail.com}
\thanks{The author was partially supported by an Israel-US BSF grant 2010/149}

\begin{abstract}
The two dimensional Jacobian Conjecture says that a morphism 
$f:\mathbb{C}[x,y]\to \mathbb{C}[x,y]$
having an invertible Jacobian, is invertible.
We show that a morphism $f$ having an invertible Jacobian is invertible,
in each of the following two special cases: 
The degree of $f(x)$ is $\leq 2$;
The $(0,1)$-degrees or $(1,0)$-degrees
of all monomials in $f(x)$ are of
the same parity.

In each case there is no restriction on the degree of $f(y)$
nor on the parity of the $(0,1)$-degrees or $(1,0)$-degrees of its monomials.
\end{abstract}

\maketitle                  

\section{Introduction}
The $n$ dimensional Jacobian Conjecture says that a morphism 
$f:\mathbb{C}[x_1,\ldots,x_n]\to \mathbb{C}[x_1\ldots,x_n]$
having an invertible Jacobian, is invertible; 
see ~\cite{keller}.

Wang ~\cite[Theorem 61]{wang sep} showed the following:
``Let $D$ be a UFD with $2 \neq 0$, and let
$D[y_1,\ldots,y_n] \subseteq D[x_1,\ldots,x_n]$
be a separable ring extension
($D[y_1,\ldots,y_n]$ and $D[x_1\ldots,x_n]$
are two polynomial rings, each in $n$ variables).
If the degree of each $y_i$,
considered as a polynomial in $x_1,\ldots,x_n$,
is $\leq 2$,
then $D[y_1,\ldots,y_n] = D[x_1\ldots,x_n]$".
Wang also showed in ~\cite[Theorem 38]{wang sep} that if $R$ is a ring and
$f: R[x_1,\ldots,x_n] \to R[x_1,\ldots,x_n]$
is a morphism
that satisfies 
$\Jac(f(x_1),\ldots,f(x_n)) \in R[x_1,\ldots,x_n]^*$,
then
$R[f(x_1),\ldots,f(x_n)] \subseteq R[x_1,\ldots,x_n]$
is separable. 
(~\cite[Theorem 38]{wang sep} shows that the converse is also true,
namely, separability implies invertibility of the Jacobian).
Combining the two yields Wang's theorem 
~\cite[Theorem 62]{wang sep} ~\cite[Proposition 4.3.1]{essen}:
``Let $D$ be a UFD with $2 \neq 0$,
and let 
$f: D[x_1,\ldots,x_n] \to D[x_1,\ldots,x_n]$
be a morphism that satisfies
$\Jac(f(x_1),\ldots,f(x_n)) \in D^*$.
If the degree of each $f(x_i)$ is $\leq 2$,
then $D[f(x_1),\ldots,f(x_n)] = D[x_1\ldots,x_n]$,
namely $f$ is invertible". 

Here we will mostly deal with $n=2$,
$k$ a field of characteristic zero
and 
$f: k[x,y] \to k[x,y]$ 
a morphism that satisfies
$\Jac(f(x),f(y)) \in k^*$;
for convenience denote
$p:=f(x)$ and $q:=f(y)$.
In Theorem \ref{1} $k=\mathbb{C}$
and in Theorem \ref{2} $k=\mathbb{C}$
or $k=\mathbb{R}$.

Wang's theorem for $n=2$ and $D=\mathbb{C}$ says that if 
$p$ is of degree $\leq 2$ and $q$ is of degree $\leq 2$,
then $f$ is invertible.
In Theorem \ref{1}, we show that if 
$p$ is of degree $\leq 2$
(and $q$ can have any degree),
then $f$ is invertible.
Clearly this is a generalization of Wang's theorem
when $n=2$ and $D=\mathbb{C}$.
Notice two great things in Wang's theorem: First, it is valid for all $n$, 
and second, $D$ is a UFD with $2 \neq 0$.
When $n=2$ and $D$ is a field of characteristic zero,
it is not difficult to show that a morphism $f$ of Wang's theorem
is invertible, see Proposition \ref{prop wang 2}.
\begin{remark}\label{other nice conditions}
There exist other nice conditions when $n=2$ and 
$D$ is a field of characteristic zero,
each implies that $f$ is invertible: 
\begin{itemize}
\item [(1)] The degree of $p$ is $\leq 100$ 
and the degree of $q$ is $\leq 100$;
Moh ~\cite{moh} ~\cite[Theorem 10.2.30]{essen}.
\item [(2)] The degree of $p$ or the degree of $q$ is a prime number;
Magnus ~\cite{magnus} ~\cite[Corollary 10.2.25]{essen}. 
\item [(3)] The greatest common divisor of the degrees of $p$
and $q$ is $\leq 2$; Nakai and Baba ~\cite{nakai baba}.
\item [(4)] The greatest common divisor of the degrees of $p$
and $q$ is $\leq 8$ or a prime number;
Appelgate, Onishi, Nagata
~\cite{nagata1} ~\cite{nagata2} ~\cite[Theorem 10.2.26]{essen}.
\end{itemize}
\end{remark}

Given
$f: k[x_1,\ldots,x_n] \to k[x_1\ldots,x_n]$,
$f(x_i)$ is a polynomial in $x_1,\ldots,x_n$,
so we can consider its $x_j$-degree, 
which is also called the 
$(0,\ldots,1,\ldots,0)$-degree of $f(x_i)$, 
where $1$ is in the $j$'th place.
(The $(1,\ldots,1)$-degree of $f(x_i)$
is usually called the degree of $f(x_i)$).
In Theorem \ref{2},
which does not generalize a known result, but may be of some interest,
we show that if the $(0,1)$-degrees or $(1,0)$-degrees 
of all monomials of $p$ 
have the same parity, then $f$ is invertible.

\section{Preliminaries}
Our proofs of Theorem \ref{1} and Theorem \ref{2}
rely on results found in ~\cite{vered valqui} and ~\cite{vered}.
For the convenience of the reader, we now bring the specific results 
which are needed in the proofs of Theorem \ref{1} and Theorem \ref{2}.

Let $k$ be a field of characteristic zero.
Let $\alpha$ be the exchange involution on $k[x,y]$,
namely $\alpha$ is the morphism of order $2$ (hence invertible)
defined by
$\alpha(x)=y$ and $\alpha(y)=x$.
A morphism $g: k[x,y] \to k[x,y]$
is an $\alpha$-morphism if
$g \alpha= \alpha g$.
If $\sigma$ is any involution on $k[x,y]$,
then a morphism $g: k[x,y] \to k[x,y]$
is a $\sigma$-morphism if $g \sigma= \sigma g$.
In those definitions there is no demand on the Jacobian of $g$; 
however, in the results, we will always assume that a given 
$\sigma$-morphism has an invertible Jacobian.
We will also work with the following involutions:
$\beta(x)= x, \beta(y)= -y$,
$\gamma(x)= -x, \gamma(y)= y$,
and 
$\epsilon(x)= -x, \epsilon(y)= -y$.
Notice that $\alpha$, $\beta$ and $\gamma$ 
are in the same conjugacy class:
$\alpha \beta \alpha= \gamma$, 
and $g^{-1} \alpha g= \beta$ where 
$g(x)=(1/2)(x+y), g(y)=y-x$,
$g^{-1}(x)=x-(1/2)y, g^{-1}(y)=x+(1/2)y$.
More generally, given two involutions on $k[x,y]$, $\sigma$ and $\tau$,
a morphism $g: k[x,y] \to k[x,y]$ is
a $\sigma,\tau$-morphism if $g \sigma = \tau g$.

\begin{proposition}\label{two conj classes}
There exist two conjugacy classes of involutions on 
$k[x,y]$ in the group of invertible morphisms
(= automorphisms) of $k[x,y]$: 
\begin{itemize}
\item A class which consists of all involutions having Jacobian $-1$,
denote it by $C_{-1}$.
\item A class which consists of all involutions having Jacobian $1$,
denote it by $C_{1}$.
\end{itemize}
\end{proposition}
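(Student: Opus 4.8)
The plan is to isolate the two obvious invariants first, and then to reduce an arbitrary involution to a linear one.

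\textbf{Step 1 (the two candidate classes, and reformulation).} An involution $\sigma$ satisfies $\sigma^2=\ide$, hence is an automorphism and $\Jac(\sigma)\in k^*$; multiplicativity of the Jacobian under composition (substituting an automorphism into a constant changes nothing) gives $\Jac(\sigma)^2=\Jac(\sigma^2)=\Jac(\ide)=1$, so $\Jac(\sigma)\in\{1,-1\}$. The same multiplicativity yields $\Jac(g^{-1}\sigma g)=\Jac(g)^{-1}\Jac(\sigma)\Jac(g)=\Jac(\sigma)$ for every automorphism $g$, so $C_{1}$ and $C_{-1}$ are disjoint, exhaust all involutions, and each is a (nonempty, since $\epsilon\in C_{1}$ and $\alpha\in C_{-1}$) union of conjugacy classes. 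It therefore remains to show that each of $C_{1},C_{-1}$ is a \emph{single} conjugacy class, equivalently that every involution with Jacobian $-1$ is conjugate to $\alpha$, and every involution with Jacobian $1$ is conjugate to $\epsilon$.

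\textbf{Step 2 (reduction to the affine or triangular subgroup).} By the Jung--van der Kulk theorem $\Aut(k[x,y])$ is the amalgamated free product of the affine subgroup $A$ and the triangular (de Jonqui\`eres) subgroup $B$ along $A\cap B$; it thus acts without inversions on the associated Bass--Serre tree, whose vertex stabilizers are the conjugates of $A$ and of $B$. Since a finite group acting on a tree fixes a vertex, the finite subgroup $\langle\sigma\rangle$ is conjugate into $A$ or into $B$; hence so is $\sigma$.

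\textbf{Step 3 (explicit conjugations).} If $\sigma$ is conjugate to an affine map $x\mapsto Mx+v$ with $M\in\GL_{2}(k)$, $v\in k^{2}$, then $\sigma^{2}=\ide$ forces $M^{2}=I$ and $(I+M)v=0$; as $M$ is then diagonalizable over $k$ with eigenvalues $\pm1$, one has $v\in\ker(I+M)=\Img(I-M)$, so a conjugation by a suitable translation removes $v$ and leaves a linear involution $M\ne I$, which is conjugate in $\GL_{2}(k)$ to $\beta$ or equals $\epsilon=-I$. If instead $\sigma$ is conjugate into $B$, say $\sigma(x)=ax+b$, $\sigma(y)=cy+p(x)$, then $a^{2}=c^{2}=1$; a conjugation by a translation in $x$ makes $b=0$, and the involutivity relation $cp(x)+p(ax)=0$ forces $p$ to be even or odd in a way that makes the linear equation $s(x)-c\,s(-x)=$ (the relevant multiple of) $p(x)$ solvable in $k[x]$, so conjugating by $y\mapsto y+s(x)$ kills $p$ and again yields a linear involution. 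In every case $\sigma$ is conjugate to $\beta$, $\gamma$ or $\epsilon$; since $\alpha\beta\alpha=\gamma$ and $g^{-1}\alpha g=\beta$ for the $g$ recorded before the statement, the first two are conjugate to $\alpha$, completing the proof.

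\textbf{Main obstacle.} Steps 1 and 3 are essentially bookkeeping; the real content is Step 2, where the passage from ``$\sigma$ has order $2$'' to ``$\sigma$ is conjugate into $A$ or $B$'' genuinely relies on the amalgamated-product structure of $\Aut(k[x,y])$ and the fixed-point property of finite groups acting on trees. A secondary point deserving care is verifying in the triangular case that the parity of $p$ dictated by the involutivity relation is exactly what permits solving for $s(x)$, and that all the linear normal forms produced have the asserted Jacobians $\pm1$, so that no involution is misclassified.
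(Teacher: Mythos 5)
Your proposal is correct and follows essentially the same route as the paper's own sketch: reduce the Jacobian to $\pm 1$, invoke Jung--van der Kulk together with Serre's fixed-point theorem for finite groups acting on trees to conjugate an arbitrary involution into the affine or de Jonqui\`eres subgroup, and then classify involutions there by explicit normalization. The only difference is cosmetic --- you normalize a general triangular involution directly to a linear one, whereas the paper lists explicit representatives $a,b$ of the two triangular classes and conjugates those to $\beta$ and $\epsilon$.
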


Denote $G_1$= affine automorphisms, $G_2$= de Jonquieres automorphisms.
Recall Jung-van der Kulk automorphism theorem which says that
the group of automorphisms 
of $k[x,y]$, $k$ is any field, is the amalgamated free product of
$G_1$ and $G_2$ over their intersection,
see ~\cite[Theorem 5.1.11]{essen}, ~\cite{jung} and ~\cite{kulk}.

\begin{proof}[Sketch of proof]
Two trivial remarks:
\begin{itemize}
\item Two conjugate invertible morphisms (in particular, two conjugate involutions) 
must have the same Jacobian. So all the members of a given 
conjugacy class have the same Jacobian.
\item Given an involution $\delta$, the Jacobian of $\delta$ must equal $1$ or $-1$
(The Jacobian of $\delta$ equals its inverse, since $\delta$ equals its inverse).
\end{itemize}
Apriori, it may happen that there exist two or more conjugacy classes of involutions 
having Jacobian $-1$, and there exist two or more conjugacy classes
of involutions having Jacobian $1$.
However, the following arguments show that there exists only one conjugacy class 
of involutions having Jacobian $-1$, denote it $C_{-1}$,
and there exists only one conjugacy class of involutions having Jacobian $1$,
call it $C_{1}$.
{}From direct calculations we get the following two facts:
\begin{itemize}
\item In $G_1$ there exist exactly two conjugacy classes of involutions, 
that of $\alpha$ (or $\beta$ or $\gamma$) and that of $\epsilon$.
\item In $G_2$ there exist exactly two conjugacy classes of involutions, 
that of $a$ and that of $b$,
where $a(x)= -x-y^2, a(y)=y$ and $b(x)= -x-y^2, b(y)= -y$.
(We obtained $6$ general forms of involutions of $G_2$,
and then checked that some are conjugate to others).
\end{itemize}
A direct calculation shows that: 
\begin{itemize}
\item $a$ and $\beta$ are conjugate: $\beta= h^{-1} a h$,
where $h(x)= -y, h(y)= -x-(1/2)y^2$.
\item $b$ and $\epsilon$ are conjugate: $\epsilon= g^{-1} b g$,
where $g(x)=y, g(y)= -x-(1/2)y^2$.
\end{itemize}
We now explain why in the group of automorphisms of $k[x,y]$ 
(and not only in $G_1 \cup G_2$)
there exist exactly two conjugacy classes of involutions: 
That of $\alpha$ and that of $\epsilon$.
 
The explanation is quite easy thanks to J. Bell ~\cite{bell} 
who told us about Serre's theorem ~\cite[page 6, Corollary 1]{trees}:
``Every element of $G$ of finite order is conjugate to an element of one of the $G_i$",
where $G$ is the amalgamated free product of $G_1$ and $G_2$ over their intersection.
Here $G$ is the group of automorphisms of $k[x,y]$ which is known 
to be the amalgamated free product of $G_1$ and $G_2$ over their intersection
(Jung-van der Kulk).
Indeed, let $\iota$ be any involution on $k[x,y]$. $\iota$ is of order $2$, 
so from Serre's theorem 
$\iota$ must be conjugate to an element, call it $e$, of $G_1$ or of $G_2$.
Trivially, $e$ is also of order $2$ (it is clear that two conjugate elements, 
each of some finite order, must have the same order), namely $e$ is an involution.
$e$ is an involution of $G_1$ or of $G_2$, therefore $e$ is conjugate to 
$\alpha$ or $\epsilon$, so $\iota$ is conjugate to $\alpha$ or $\epsilon$.
\end{proof}

\begin{remark}\label{remark gamma delta}
Let $g: k[x,y] \to k[x,y]$ be a morphism
such that 
$\Jac(g(x),g(y)) \in k^*$.
If $g$ is a $\sigma,\tau$-morphism, then
$\sigma,\tau \in C_{-1}$ or
$\sigma,\tau \in C_{1}$.
Indeed, $g \sigma= \tau g$ combined with 
$\Jac(g(x),g(y)) \in k^*$ implies that
the Jacobian of $\sigma$ equals the Jacobian of $\tau$,
so both have Jacobian $-1$ or both have Jacobian $1$.
\end{remark}
A morphism $g: k[x,y] \to k[x,y]$ is invertible
if there exists a morphism
$h: k[x,y] \to k[x,y]$ such that
$gh=hg=1$, where $1$ is the identity morphism.
Given a morphism $g: k[x,y] \to k[x,y]$
(with no restriction on its Jacobian)
denote $\Img(g):=k[g(x),g(y)]$.
$\Img(g)$ is a sub-algebra of $k[x,y]$.
A morphism $g$ of $k[x,y]$ 
is invertible if and only if $\Img(g)= k[x,y]$,
see ~\cite[Lemma 1 in the Introduction; Definition 1.1.5]{essen} and 
~\cite[page 343]{cohn}. 

The connection between involutions and the Jacobian Conjecture begins in 
the following conjecture:

\begin{conjecture}[The $\alpha$ Jacobian Conjecture]\label{alpha conj}
Let $f: k[x,y] \to k[x,y]$ 
be an $\alpha$-morphism that satisfies
$\Jac(f(x),f(y)) \in k^*$.
Then $f$ is invertible.
\end{conjecture}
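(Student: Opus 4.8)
Conjecture \ref{alpha conj} is open, so what follows is a plan of attack together with an honest note on where it stalls. One implication is immediate: the $\alpha$ Jacobian Conjecture is a special case of the two dimensional Jacobian Conjecture, so a proof of the latter settles it; the reason for isolating the $\alpha$-version is the hope that the built-in symmetry makes it accessible on its own, and that symmetry is what I would try to exploit. The starting point is that $\alpha$-morphisms are extremely rigid: writing $f\alpha=\alpha f$ on the generators gives $f(y)=\alpha(f(x))$, so an $\alpha$-morphism is determined by the single polynomial $p:=f(x)$ via $q:=f(y)=p(y,x)$, and the hypothesis $\Jac(p,q)\in k^*$ turns into the symmetric identity
\[
p_x(x,y)\,p_x(y,x)-p_y(x,y)\,p_y(y,x)\in k^* .
\]
Thus the conjecture is equivalent to the statement that every $p$ obeying this identity satisfies $k[p(x,y),p(y,x)]=k[x,y]$.

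From here I would run the classical two variable argument. Since $\Jac(p,q)\in k^*$, the morphism $f$ is injective (characteristic zero), hence $\Img(f)=k[p,q]$ is itself a polynomial ring and the whole problem is to show it exhausts $k[x,y]$ (this is the criterion recalled just before Conjecture \ref{alpha conj}). Next, pass to leading forms: by the classical structure of the leading forms of a Jacobian pair (Magnus), the top homogeneous components are $\bar p=c_1H^{a}$ and $\bar q=c_2H^{b}$ for a common homogeneous $H$; because $\deg p=\deg q$ one gets $a=b$, and then the symmetry $\bar q(x,y)=\bar p(y,x)$ forces $H(y,x)$ to be a scalar multiple of $H(x,y)$, so $H$ is symmetric up to a root of unity. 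Finally, attempt a Moh-type reduction: compose $f$ with suitable automorphisms to lower $\deg f(x)$. Composing with an automorphism on either side keeps us among $\sigma,\tau$-morphisms with $\sigma,\tau\in C_{-1}$ (Proposition \ref{two conj classes} together with Remark \ref{remark gamma delta}), so the structural results on such morphisms in \cite{vered valqui} and \cite{vered} stay available throughout, and one would induct on the degree of $f(x)$.

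The main obstacle is exactly the obstacle in the full conjecture: the degree-reduction step is not known to terminate, and keeping control of the lower order terms and of the Newton polygon through the reductions is the crux of every known partial result (Moh's bound of degree $\leq 100$, the gcd-of-degrees theorems of Remark \ref{other nice conditions}, and so on). The constraints $q=p(y,x)$ and the near-symmetry of $H$ do cut down the possibilities, but I do not see that they subdue the general Newton-polygon combinatorics; as far as I can tell a complete proof of Conjecture \ref{alpha conj} would amount to a proof of the two dimensional Jacobian Conjecture itself. What is realistically reachable --- and what Theorems \ref{1} and \ref{2} actually do --- is to add side hypotheses that make the reduction halt at once: a degree bound on $f(x)$, or a parity restriction on the support of the monomials of $f(x)$, at which point the $\sigma$-morphism machinery of \cite{vered valqui} and \cite{vered} can be brought to bear.
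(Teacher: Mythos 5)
There is a genuine gap: you have not proved the statement, and the premise on which you stop --- that Conjecture \ref{alpha conj} is open and that settling it ``would amount to a proof of the two dimensional Jacobian Conjecture itself'' --- is contradicted by the paper. Immediately after stating the conjecture, the paper records Theorem \ref{alpha true} (``The $\alpha$ Jacobian Conjecture is true''), whose proof is Valqui's argument in \cite[Proposition 4.1]{vered valqui}; the only content supplied in the paper itself is the easy normalization from $\Jac(f(x),f(y))=a\in k^*$ to $\Jac=1$ by replacing $f(x)$ with $a^{-1}f(x)$, which leaves $k[f(x),f(y)]$ unchanged. So the built-in symmetry $f(y)=\alpha(f(x))$ that you correctly extract at the start is in fact enough to close the problem without any Moh-type degree reduction or Newton-polygon induction; indeed the entire architecture of the paper (the $\alpha$ restriction theorem, Theorem \ref{p symmetric alpha}, Corollary \ref{p symmetric delta}, and ultimately Theorems \ref{1} and \ref{2}) rests on Theorem \ref{alpha true} being an established fact, not a conjecture.

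Your structural observations are sound as far as they go --- the reformulation of the hypothesis as $p_x(x,y)p_x(y,x)-p_y(x,y)p_y(y,x)\in k^*$, the equality $\deg p=\deg q$, and the near-symmetry of the common leading form $H$ are all correct --- but they are preparation for an induction you acknowledge you cannot complete. The missing idea is the one actually used in \cite[Proposition 4.1]{vered valqui}: exploit the commutation $f\alpha=\alpha f$ directly (rather than only its trace on leading forms) to force $\Img(f)=k[x,y]$. If you want to reconstruct a proof rather than a plan, that is the reference to work from; as written, the proposal establishes nothing beyond the trivial direction that the full two dimensional Jacobian Conjecture would imply the $\alpha$-version.
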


An analogous conjecture for the first Weyl algebra, 
``the $\alpha$ Dixmier Conjecture",
was made before the $\alpha$ Jacobian Conjecture and was almost proved, 
see ~\cite{vered before valqui}.
More specifically, we first dealt with the first Weyl algebra and found a family of
$\alpha$-endomorphisms which is easily seen to be a family of $\alpha$-automorphisms,
see ~\cite[Proposition 2.8]{vered before valqui},
but we were not able to show that this family includes all 
$\alpha$-endomorphisms.
Another result about the form of an 
$\alpha$-endomorphism is ~\cite[Lemma 2.6]{vered before valqui}.
An almost a proof for the $\alpha$ Dixmier Conjecture is ~\cite[Theorem 2.9]{vered before valqui}.
After reading  ~\cite{vered before valqui}, C. Valqui suggested a nice proof for 
the $\alpha$ Jacobian Conjecture,
which appears in ~\cite[Proposition 4.1]{vered valqui}.

\begin{theorem}[The $\alpha$ Jacobian Conjecture is true]\label{alpha true}
Let $f: k[x,y] \to k[x,y]$ 
be an $\alpha$-morphism that satisfies
$\Jac(f(x),f(y)) \in k^*$.
Then $f$ is invertible.
\end{theorem}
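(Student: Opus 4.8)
The plan is to replace the $\alpha$-symmetry by the easier-to-handle $\beta$-symmetry, turn the Jacobian hypothesis into one polynomial identity, and then induct on the $y$-degree of $f(x)$, using elementary triangular automorphisms to cut that degree down.

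\emph{Step 1: reduction to a $\beta$-morphism.} Since $g^{-1}\alpha g=\beta$ for the explicit affine $g$ recorded above, setting $\tilde f:=g^{-1}fg$ gives a morphism with $\tilde f\beta=\beta\tilde f$ and $\Jac(\tilde f(x),\tilde f(y))\in k^*$, and $\tilde f$ is invertible if and only if $f$ is (a conjugate by an automorphism is invertible iff the original is). So I may assume from the start that $f\beta=\beta f$. Writing $p:=f(x)$, $q:=f(y)$, this says precisely that $p$ is even in $y$ and $q$ is odd in $y$; write $p=P(x,y^2)$ and $q=y\,Q(x,y^2)$. Computing $\Jac(p,q)$ and setting $z=y^2$ gives the identity in $k[x,z]$
\[
P_x\,Q+2z\bigl(P_xQ_z-P_zQ_x\bigr)=c\in k^*,
\]
subscripts denoting partial derivatives. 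Putting $z=0$ shows $P_x(x,0)\,Q(x,0)\in k^*$ inside $k[x]$, so $P(x,0)$ is affine in $x$ and $Q(x,0)$ is a nonzero constant; composing with affine $\beta$-commuting automorphisms of the target I normalize $P(x,0)=x$, $Q(x,0)=1$, $c=1$. (These normalizations are preserved by the reductions in Step~2, since those leave $q$ and $p|_{y=0}$ untouched.)

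\emph{Step 2: induction on $n:=\deg_y p=2\deg_z P$.} If $n=0$ then $p=x$ by Step~1, the identity collapses to $Q+2zQ_z=1$, which forces $Q=1$, so $q=y$ and $f$ is the identity. Suppose $n\ge 2$; put $d:=\deg_z P\ge 1$, $e:=\deg_z Q$, with leading (in $z$) coefficients $P_d,Q_e\in k[x]\setminus\{0\}$. Extracting the coefficient of the top power of $z$ from the identity gives the relation
\[
(1+2e)\,P_d'\,Q_e=2d\,P_d\,Q_e',
\]
whence $P_d$ and $Q_e$ are scalar multiples of powers of a common polynomial $h\in k[x]$, with exponents tied by $1+2e$ and $2d$. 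If $(2e+1)\mid d$, so $d=(2e+1)m$, then the top $y$-monomial of $p$ is a scalar multiple of $q^{2m}$ (an even power of the odd polynomial $q$, hence even in $y$), and replacing $p$ by $p-(\text{scalar})\,q^{2m}$ while keeping $q$ — which amounts to composing $f$ with an elementary, $\beta$-commuting, triangular automorphism $x\mapsto x-(\text{scalar})\,y^{2m}$, $y\mapsto y$ — yields again a $\beta$-morphism with the same Jacobian, the same image, and strictly smaller $\deg_y p$. The inductive hypothesis then applies.

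\emph{The main obstacle} is the exceptional case $(2e+1)\nmid d$, in which no elementary $\beta$-commuting automorphism lowers a degree, so the naive induction stalls; the point is to show this configuration cannot occur for a counterexample of minimal $n$. Here the essential extra input is that the identity of Step~1 is \emph{not} the vanishing of a Jacobian: the term $P_xQ$ shifts the relation between the leading coefficients of $P$ and $Q$ away from the one a plain Jacobian pair satisfies. I expect the case to be ruled out by a Newton-polygon argument — comparing the weighted-homogeneous leading forms of $P$ and $Q$ for a suitable weighting of $x$ and $z$, and using all coefficients of the identity (not only the top one) to pin down the principal part of $(P,Q)$ — an argument that would be unavailable for an ordinary Jacobian pair, which is exactly why this step carries the whole proof. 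Once it is in place, the induction closes and $f$ is invertible.
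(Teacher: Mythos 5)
Your proposal has a genuine, and self-acknowledged, gap at precisely the step that carries all the difficulty. The reduction to a $\beta$-morphism, the identity $P_xQ+2z(P_xQ_z-P_zQ_x)=c$, the normalization at $z=0$, and the top-coefficient relation $(1+2e)P_d'Q_e=2dP_dQ_e'$ are all correct, and the ``good'' case $(2e+1)\mid d$ does yield a degree-lowering elementary $\beta$-commuting automorphism. But the exceptional case $(2e+1)\nmid d$ is left entirely to a hoped-for Newton-polygon argument that is never carried out, and this is not a routine loose end: ruling out the bad leading-form configurations is exactly the part of any two-variable Jacobian-type problem where all known attempts at the full conjecture stall. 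Worse, your dichotomy does not even split cleanly: when $P_d$ and $Q_e$ are both constants the relation $(1+2e)P_d'Q_e=2dP_dQ_e'$ reads $0=0$ and gives no common polynomial $h$ and no divisibility constraint at all, so for instance $d=2$, $e=1$ (where $3\nmid 2$) is not touched by your leading-coefficient analysis, and no elementary substitution $x\mapsto x-(\text{scalar})y^{2m}$ is available. You would need to exploit the non-Jacobian term $P_xQ$ in all $z$-coefficients of the identity, not just the top one, to exclude these configurations, and that is the actual content of the theorem.

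For comparison, the paper does not prove this statement internally at all: its ``proof'' consists of citing Proposition 4.1 of the Moskowicz--Valqui reference, together with the trivial rescaling reducing $\Jac(f(x),f(y))=a\in k^*$ to $\Jac=1$. So your attempt is a genuinely different (and more self-contained) route in spirit, but as written it establishes only the base case and the divisible case of the induction, not the theorem.
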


\begin{proof}
See ~\cite[Proposition 4.1]{vered valqui}.

Notice that in ~\cite[Proposition 4.1]{vered valqui} 
$\Jac(f(x),f(y)) =1$; it is clear that if
$\Jac(f(x),f(y)) = a \in k^*$ (with $a$ not necessarily equals $1$),
we can consider $\tilde{f}(x):= a^{-1}f(x)$, 
$\tilde{f}(y):= f(y)$. Then
$\Jac(\tilde{f}(x),\tilde{f}(y)) = 1$,
hence ~\cite[Proposition 4.1]{vered valqui} implies that 
$\tilde{f}$ is invertible, and then $f$ is invertible,
because 
$k[f(x),f(y)]=k[\tilde{f}(x),\tilde{f}(y)] = k[x,y]$. 
\end{proof}

Then it is immediate to get:

\begin{corollary}\label{cor alpha true}
Let $f: k[x,y] \to k[x,y]$ 
be a $\sigma,\tau$-morphism,
$\sigma,\tau \in C_{-1}$,
that satisfies
$\Jac(f(x),f(y)) \in k^*$.
Then $f$ is invertible.
\end{corollary}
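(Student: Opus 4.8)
The plan is to reduce directly to Theorem \ref{alpha true} by conjugating $f$ into an $\alpha$-morphism. By Proposition \ref{two conj classes}, $C_{-1}$ is a single conjugacy class of involutions (the ones with Jacobian $-1$), and $\alpha \in C_{-1}$; hence there are automorphisms $u,v$ of $k[x,y]$ with $\sigma = u^{-1}\alpha u$ and $\tau = v^{-1}\alpha v$. First I would record the elementary fact that pre- and post-composing a morphism with automorphisms does not destroy the invertible-Jacobian property: by the chain rule the Jacobian of a composite is a product of (pulled-back) Jacobians of the factors, and an automorphism of $k[x,y]$ has Jacobian in $k^*$; hence $g := v\,f\,u^{-1}$ again satisfies $\Jac(g(x),g(y)) \in k^*$.

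Next I would verify that $g$ is an $\alpha$-morphism. From $\sigma = u^{-1}\alpha u$ we get $u\sigma = \alpha u$, hence $\sigma u^{-1} = u^{-1}\alpha$; from $\tau = v^{-1}\alpha v$ we get $v\tau = \alpha v$. Using the hypothesis $f\sigma = \tau f$,
\[
g\alpha = v\,f\,u^{-1}\alpha = v\,f\,\sigma\,u^{-1} = v\,\tau\,f\,u^{-1} = \alpha\,v\,f\,u^{-1} = \alpha g,
\]
so $g\alpha = \alpha g$. By Theorem \ref{alpha true}, $g$ is invertible. Finally $f = v^{-1}\,g\,u$ is a composite of invertible morphisms ($v^{-1}$ and $u$ are automorphisms, $g$ is invertible), so $f$ is invertible, which completes the proof.

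There is no real obstacle here; this is why the paper calls it immediate. The only points needing care are choosing the direction of the conjugations ($\sigma = u^{-1}\alpha u$ rather than $u\alpha u^{-1}$) so that the cancellations in the displayed computation go through, and the (standard but worth stating) remark that the condition $\Jac \in k^*$ survives composition with automorphisms on either side. One could alternatively invoke Remark \ref{remark gamma delta} to package the bookkeeping, but the explicit conjugation above is cleaner and self-contained.
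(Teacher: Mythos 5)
Your proof is correct and is essentially identical to the paper's own argument: both conjugate $f$ to $v f u^{-1}$ using $\sigma = u^{-1}\alpha u$, $\tau = v^{-1}\alpha v$, check via the chain rule that the Jacobian condition is preserved, apply Theorem \ref{alpha true} to conclude $v f u^{-1}$ is invertible, and deduce invertibility of $f$. No differences worth noting.
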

 
\begin{proof}
$\sigma,\tau \in C_{-1}$, so there exist invertible morphisms 
$u$ and $v$ such that 
$\sigma = u^{-1} \alpha u$ and
$\tau = v^{-1} \alpha v$.
$f$ is a $\sigma,\tau$-morphism:
$f \sigma = \tau f$.
So
$f u^{-1} \alpha u = v^{-1} \alpha v f$,
then
$(v f u^{-1}) \alpha = \alpha (v f u^{-1})$,
namely,
$v f u^{-1}$ is an $\alpha$-morphism.
{}From Theorem \ref{alpha true} ~\cite[Proposition 4.1]{vered valqui},
we get that $v f u^{-1}$ is invertible.
Indeed, Theorem \ref{alpha true} can be applied here, since from the Chain Rule, 
the fact that any invertible morphism has an invertible Jacobian, 
and the assumption that $f$ has an invertible Jacobian, we obtain:
$\Jac((v f u^{-1})(x), (v f u^{-1})(y)) \in k^*$.
Clearly, invertibility of $v f u^{-1}$ implies invertibility of $f$.
\end{proof}

Similarly to the $\alpha$ Jacobian Conjecture \ref{alpha conj} we have:

\begin{conjecture}[The $\epsilon$ Jacobian Conjecture]\label{epsilon conj}
Let $f: k[x,y] \to k[x,y]$ 
be an $\epsilon$-morphism that satisfies
$\Jac(f(x),f(y)) \in k^*$.
Then $f$ is invertible.
\end{conjecture}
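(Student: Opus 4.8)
The plan is to prove the statement by induction on $\deg p+\deg q$, running the classical Abhyankar-Moh / Jung-van der Kulk reduction of degree for pairs with unit Jacobian, but organised so that every reduction step keeps us inside the group of $\epsilon$-automorphisms. The first task is to describe that group. Since $\epsilon$ acts as $-\ide$ on the linear part $kx\oplus ky$, a morphism $g$ satisfies $g\epsilon=\epsilon g$ if and only if $g(x)$ and $g(y)$ are both \emph{odd} polynomials (every monomial of odd total degree); hence the $\epsilon$-automorphisms are generated by $\GL_2$ (every linear map commutes with $\epsilon$) together with the ``$\epsilon$-de Jonquieres'' automorphisms $x\mapsto ax+P(y)$, $y\mapsto by$ with $a,b\in k^*$ and $P$ odd. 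There is no shortcut through Corollary~\ref{cor alpha true}: an $\epsilon$-morphism need not be a $\beta$- or a $\gamma$-morphism (for instance $x\mapsto x+y^3$, $y\mapsto y$), and $\epsilon\in C_1$ rather than $C_{-1}$, so Theorem~\ref{alpha true} does not apply directly.

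For the inductive step, let $\ov p,\ov q$ be the top-degree homogeneous components of $p=f(x)$ and $q=f(y)$. As $p$ and $q$ are nonzero odd polynomials we have $\deg p,\deg q\ge 1$ with both degrees odd, so $\deg p+\deg q$ is even; if $\deg p=\deg q=1$ then $f$ is affine and its unit Jacobian makes it invertible, so assume $\deg p+\deg q\ge 4$. The component of $\Jac(p,q)$ in degree $\deg p+\deg q-2\ge 2$ must then vanish, that is $\Jac(\ov p,\ov q)=0$; by the usual unique-factorisation argument this forces $\ov p=\lambda H^{m}$ and $\ov q=\mu H^{n}$ for a common homogeneous form $H$, where $g:=\deg H=\gcd(\deg p,\deg q)$ and $m=\deg p/g$, $n=\deg q/g$ are coprime. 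Here the $\epsilon$-hypothesis pays off: $\deg p$ and $\deg q$ are odd, so $g$, $m$ and $n$ are \emph{all odd}.

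The reduction step goes as follows. After composing with a suitable element of $\GL_2$ (which preserves the $\epsilon$-property) we may assume $\deg p\ge\deg q$, so $m\ge n$. If $n=1$, then $\ov p$ is a scalar multiple of $\ov q^{m}$, and replacing $f$ by $f\phi$, where $\phi\colon x\mapsto x-\lambda' y^{m},\ y\mapsto y$ --- which is an $\epsilon$-de Jonquieres automorphism \emph{precisely because $m$ is odd} --- changes $f(x)$ from $p$ to $p-\lambda' q^{m}$, a polynomial of strictly smaller degree, while leaving $f(y)=q$, the unit Jacobian (Chain Rule, as in the proof of Corollary~\ref{cor alpha true}), and the $\epsilon$-property intact; the inductive hypothesis then gives $f\phi$ invertible, hence $f$ invertible. (The case $\deg q=1$ is just the instance $g=1$, $n=1$ of this.) So the only configurations a minimal counterexample can exhibit are those with $m,n\ge 2$; since $m$ and $n$ are odd this means $m,n\ge 3$, so $\deg p,\deg q\ge 3g$ and neither degree divides the other.

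This residual case is where I expect the real obstacle to be. With neither degree dividing the other, a single substitution no longer drops a degree, and one would have to bring in the finer valuation / Newton-polygon machinery of \cite{vered valqui} and \cite{vered} --- tracking the leading forms of $p$ and $q$ relative to \emph{all} weight vectors and squeezing more out of $\Jac(p,q)\in k^*$ --- in order either to produce a further $\epsilon$-admissible reduction or to contradict the configuration. This is precisely the range in which the general two-dimensional Jacobian Conjecture remains open, and the partial results of Remark~\ref{other nice conditions} dispose of only part of it: Appelgate-Onishi-Nagata (extending Nakai-Baba) eliminates every counterexample with $\gcd(\deg p,\deg q)$ at most $8$ or prime, so in a surviving $\epsilon$-counterexample $g$ is odd, composite and $\ge 9$, and Moh eliminates those with both degrees $\le 100$. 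Thus the argument above already proves the $\epsilon$ Jacobian Conjecture unconditionally whenever $\gcd(\deg p,\deg q)\le 8$; the genuinely open residue --- $g=\gcd(\deg p,\deg q)$ odd, composite and $>8$ (for example $g=9$ or $g=15$) with at least one of $\deg p,\deg q$ large --- appears to be as hard as the corresponding open instance of the full conjecture.
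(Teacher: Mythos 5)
First, a point of orientation: the statement you are proving is presented in the paper as a \emph{conjecture}, not a theorem. The author writes explicitly that she suspects the $\epsilon$ Jacobian Conjecture is true ``though we have not yet proved it,'' and the route she uses for the $\alpha$ version (Theorem~\ref{alpha true} via \cite{vered valqui}, then Corollary~\ref{cor alpha true}) is unavailable here for exactly the reason you identify, namely $\epsilon\in C_1$ while that machinery lives in $C_{-1}$. So there is no proof in the paper to compare against, and the question is only whether your argument is complete. It is not, and you say so yourself: the induction on $\deg p+\deg q$ only advances when one of the exponents $m,n$ equals $1$, i.e.\ when one leading form is (a scalar multiple of) a power of the other, so that a single triangular substitution $x\mapsto x-\lambda'y^{m}$ kills the top term. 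The residual configuration $m,n\ge 3$ (both odd) is precisely the hard core of the two-dimensional Jacobian Conjecture; declaring that one ``would have to bring in the finer valuation / Newton-polygon machinery'' is naming the obstacle, not overcoming it. A proof that ends by observing that the remaining case ``appears to be as hard as the corresponding open instance of the full conjecture'' is a reduction, not a proof.

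The deeper issue is that the $\epsilon$-hypothesis has bought you almost nothing. Its only role in your argument is to force $\deg p$, $\deg q$, $g$, $m$, $n$ to be odd and thereby to make the elementary reduction $x\mapsto x-\lambda'y^{m}$ stay inside the class of $\epsilon$-morphisms so the inductive hypothesis applies. But the identical degree-lowering reduction runs for an \emph{arbitrary} Keller map with no equivariance at all (one simply does not need to preserve any symmetry class when the inductive statement is the unrestricted conjecture), and it stalls at exactly the same place. So your argument establishes the $\epsilon$ conjecture only in the range where the general conjecture is already known (e.g.\ via Magnus, Nakai--Baba, Appelgate--Onishi--Nagata, Moh, as listed in Remark~\ref{other nice conditions}), and the parity information extracted from $\epsilon$ does not interact with those theorems in any way that enlarges that range. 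Two smaller remarks: your description of the full group of $\epsilon$-automorphisms as generated by $\GL_2$ and odd de Jonquieres maps is plausible but needs an argument (it is a statement about the centralizer of a torsion element in the amalgamated free product, in the spirit of the Serre-theorem argument in the proof of Proposition~\ref{two conj classes}); it is, however, never actually used. And the correct observation that constant terms are excluded (an odd polynomial has no degree-$0$ monomial) should be stated when you claim $\deg p,\deg q\ge 1$, since it is what rules out degenerate cases.
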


We suspect that the $\epsilon$ Jacobian Conjecture is also true,
though we have not yet proved it.
A positive answer to the $\epsilon$ Jacobian Conjecture will immediately imply
that if $f: k[x,y] \to k[x,y]$ 
is a $\sigma,\tau$-morphism,
$\sigma,\tau \in C_{1}$,
that satisfies
$\Jac(f(x),f(y)) \in k^*$,
then $f$ is invertible;
same proof as that of Corollary \ref{cor alpha true}
(apply the positive answer to the 
$\epsilon$ Jacobian Conjecture instead of 
the positive answer to the $\alpha$ Jacobian Conjecture).
Whether the $\epsilon$ Jacobian Conjecture is true or not, 
thanks to the positive answer to the $\alpha$ Jacobian Conjecture, 
Theorem \ref{alpha true} ~\cite[Proposition 4.1]{vered valqui},
we were able to obtain additional results, 
some of which are brought in ~\cite{vered}; 
we now bring only those results relevant for the proofs of
Theorem \ref{1} and Theorem \ref{2}.

$k$ continues to denote a field of characteristic zero.
For a morphism $g: k[x,y] \to k[x,y]$
that satisfies
$\Jac(g(x),g(y)) \in k^*$, 
$\Img(g)$ is isomorphic to $k[x,y]$
(since if $\Jac(g(x),g(y)) \in k^*$, 
then, by ~\cite{rowen}, $g(x)$ and $g(y)$ are 
algebraically independent over $k$).

\begin{definition}[The $\alpha$ restriction condition]
We say that a morphism $g: k[x,y] \to k[x,y]$ 
satisfies the $\alpha$ restriction condition if
$\alpha(g(x)) \in \Img(g)$ and $\alpha(g(y)) \in \Img(g)$.
Equivalently, we say that $g$ satisfies the $\alpha$ 
restriction condition if the exchange involution
$\alpha$ on $k[x,y]$ when restricted to $\Img(g)$ 
is an involution on $\Img(g)$.
\end{definition}

We do not know if a morphism $f: k[x,y] \to k[x,y]$
that satisfies 
$\Jac(f(x),f(y)) \in k^*$
necessarily satisfies the $\alpha$ restriction condition,
but if it does, then it is invertible:

\begin{theorem}[The $\alpha$ restriction theorem]\label{alpha res thm}
Let $f$ be a morphism of $k[x,y]$ that satisfies
$\Jac(f(x),f(y)) = 1$.
$f$ satisfies the $\alpha$ restriction condition
$\Longleftrightarrow$ $f$ is invertible.
\end{theorem}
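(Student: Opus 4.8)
The plan is to prove the two implications separately, the forward one being essentially trivial and the converse carrying all the weight. If $f$ is invertible, then $\Img(f)=k[x,y]$, so the restriction of $\alpha$ to $\Img(f)$ is just $\alpha$ itself, which is an involution on $k[x,y]$; hence $f$ satisfies the $\alpha$ restriction condition. This direction is one line and uses nothing beyond the definitions.

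For the converse, suppose $f$ satisfies the $\alpha$ restriction condition. Since $\Jac(f(x),f(y))=1\in k^*$, the polynomials $f(x)$ and $f(y)$ are algebraically independent over $k$ by~\cite{rowen}, so $f$ restricts to an isomorphism of $k$-algebras $f\colon k[x,y]\to\Img(f)$; write $f^{-1}\colon\Img(f)\to k[x,y]$ for its inverse. The $\alpha$ restriction condition means precisely that $\alpha(\Img(f))\subseteq\Img(f)$ (since $\Img(f)=k[f(x),f(y)]$ and $\alpha$ is a ring homomorphism, it is enough that $\alpha(f(x)),\alpha(f(y))\in\Img(f)$), so $\alpha$ restricts to an involution $\alpha|_{\Img(f)}$ of $\Img(f)$, and one may form
\[
\sigma:=f^{-1}\circ\alpha|_{\Img(f)}\circ f\colon k[x,y]\to k[x,y].
\]
A direct computation gives $\sigma^2=\ide$, so $\sigma$ is an involution on $k[x,y]$, and, using $f\circ f^{-1}=\ide$ on $\Img(f)$ together with $(\alpha\circ f)(k[x,y])\subseteq\Img(f)$, one checks $f\sigma=\alpha f$; that is, $f$ is a $\sigma,\alpha$-morphism. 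Because $f$ has invertible Jacobian, Remark~\ref{remark gamma delta} forces $\sigma$ and $\alpha$ to have the same Jacobian, and since $\alpha\in C_{-1}$ (its Jacobian equals $-1$) we get $\sigma\in C_{-1}$. Therefore $f$ is a $\sigma,\tau$-morphism with $\sigma,\tau:=\alpha$ both in $C_{-1}$ and $\Jac(f(x),f(y))\in k^*$, and Corollary~\ref{cor alpha true} --- which rests on the truth of the $\alpha$ Jacobian Conjecture, Theorem~\ref{alpha true} --- yields that $f$ is invertible.

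The step that needs the most care is the construction of $\sigma$: one must check that $f$ is injective, which is exactly where algebraic independence of $f(x)$ and $f(y)$, hence the hypothesis on the Jacobian, is used, and that $\alpha$ genuinely maps $\Img(f)$ into itself, which is the content of the $\alpha$ restriction condition, so that the three maps compose to an honest endomorphism of $k[x,y]$. After that, the identities $\sigma^2=\ide$ and $f\sigma=\alpha f$ are formal, the membership $\sigma\in C_{-1}$ comes from Remark~\ref{remark gamma delta}, and the conclusion is just an application of Corollary~\ref{cor alpha true}. In particular, unlike the forward direction, this implication is genuinely deep, since it passes through the proof of the $\alpha$ Jacobian Conjecture rather than being elementary.
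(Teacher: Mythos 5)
Your proof is correct, and it reaches the conclusion by a genuinely different (and in one respect cleaner) packaging of the same underlying ingredients. The paper proves the hard direction entirely inside $\Img(f)$: it takes the two involutions $\alpha_0=\alpha|_{\Img(f)}$ and the ``exchange'' involution $\rho_0$ of $f(x),f(y)$, checks via the Chain Rule that both have Jacobian $-1$, conjugates them by some automorphism $h_0$ of $\Img(f)$ using Proposition \ref{two conj classes}, deduces that $h_0f$ is an $\alpha$-morphism of $k[x,y]$, applies Theorem \ref{alpha true} to it, and then needs a final explicit computation ($x=\sum a_{ij}((h_0f)(x))^i((h_0f)(y))^j=h_0(\cdots)$, etc.) to conclude $x,y\in\Img(h_0)=\Img(f)$ --- this last step is unavoidable there because $h_0$ is only an automorphism of $\Img(f)$, not of $k[x,y]$, so one cannot simply write $f$ as a composition of automorphisms. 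You instead transport $\alpha|_{\Img(f)}$ back to $k[x,y]$ through the isomorphism $f\colon k[x,y]\to\Img(f)$ (which is where algebraic independence of $f(x),f(y)$, i.e.\ the Jacobian hypothesis via \cite{rowen}, is used), obtaining an involution $\sigma$ with $f\sigma=\alpha f$; then Remark \ref{remark gamma delta} puts $\sigma$ in $C_{-1}$ (this tacitly uses Proposition \ref{two conj classes}, since $C_{-1}$ being a single conjugacy class is exactly what makes ``Jacobian $-1$'' imply membership), and Corollary \ref{cor alpha true} finishes. The two arguments are essentially dual --- your $\sigma$ is the pullback of the paper's $\alpha_0$, and the conjugation the paper performs by hand inside $\Img(f)$ is outsourced in your version to the proof of Corollary \ref{cor alpha true}, where the conjugating maps are automorphisms of $k[x,y]$ and invertibility of $f$ follows formally. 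Your route thereby avoids the paper's closing containment computation; the only small point worth making explicit is that $\sigma\neq\ide$ (so that it is genuinely an involution), which follows at once from its Jacobian being $-1$.
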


An immediate (and trivial) corollary to Theorem \ref{alpha res thm}
is as follows: Let $f$ be a morphism of $k[x,y]$ 
that satisfies
$\Jac(f(x),f(y)) \in k^*$.
$f$ satisfies the $\alpha$ restriction condition
$\Longleftrightarrow$ $f$ is invertible.

\begin{proof}
$\Longrightarrow$: 
$f$ satisfies the $\alpha$ restriction condition,
so $\alpha$ restricted to $\Img(f)$ is an involution on $\Img(f)$, 
$\Img(f):=k[f(x),f(y)]$.
Denote the restriction of $\alpha$ to $\Img(f)$ by $\alpha_0$.
The Jacobian of $\alpha_0$ is $-1$:
$\Jac(\alpha_0(f(x)),\alpha_0(f(y)))$
$= \Jac(\alpha(f(x)),\alpha(f(y)))$
$= \Jac((\alpha f)(x),(\alpha f)(y))= -1$
(by the Chain Rule).
Denote the involution on $\Img(f)$ which exchanges 
$f(x)$ and $f(y)$ by $\rho_0$.
Obviously, $\rho_0$ has Jacobian $-1$
(since we assumed $\Jac(f(x),f(y)) = 1$).
$\Img(f)$ is isomorphic to $k[x,y]$, hence from Proposition \ref{two conj classes} 
there exist two conjugacy classes of involutions on $\Img(f)$: 
Involutions having Jacobian $-1$ and involutions having Jacobian $1$.
$\alpha_0$ and $\rho_0$ both have Jacobian $-1$,
hence both belong to the same conjugacy class,
so there exists an invertible morphism $h_0: \Img(f) \to \Img(f)$
such that 
$\rho_0= h_0^{-1} \alpha_0 h_0$.
Therefore,
$(f \alpha)(x)= f(\alpha(x))= f(y)$
$=\rho_0(f(x))= (h_0^{-1} \alpha_0 h_0)(f(x))$
$=(h_0^{-1} \alpha_0 h_0 f)(x)$,
and,
$(f \alpha)(y)= f(\alpha(y))= f(x)$ 
$=\rho_0(f(y))= (h_0^{-1} \alpha_0 h_0)(f(y))$
$(h_0^{-1} \alpha_0 h_0 f)(y)$.
Therefore, 
$f \alpha= h_0^{-1} \alpha_0 h_0 f$.
Then, $h_0 f \alpha= \alpha_0 h_0 f$,
so $h_0 f \alpha= \alpha h_0 f$,
namely $h_0 f$ is an $\alpha$-morphism of $k[x,y]$.
Since the Jacobian of $h_0(f(x)),h_0(f(y))$
with respect to $f(x),f(y)$
is a non-zero scalar 
($h_0$ is an invertible morphism of $\Img(f)$)
and the Jacobian of $f(x),f(y)$ with respect to $x,y$
equals $1$ (by assumption),
we get that the Jacobian of 
$(h_0 f)(x),(h_0 f)(y)$ 
with respect to $x,y$
is a non-zero scalar (by the Chain Rule).
By Theorem \ref{alpha true} ~\cite[Proposition 4.1]{vered valqui}
$h_0 f$ is an invertible morphism of $k[x,y]$,
namely, 
$k[(h_0f)(x),(h_0f)(y)]= 
k[x,y]$.
Then we have:
$x= \sum a_{ij} ((h_0f)(x))^i ((h_0f)(y))^j$
$= \sum a_{ij} (h_0(f(x)))^i (h_0(f(y)))^j$

$= h_0 (\sum a_{ij} f(x)^i f(y)^j)$,
and
$y= \sum b_{ij} ((h_0f)(x))^i ((h_0f)(y))^j$

$= \sum b_{ij} (h_0(f(x)))^i (h_0(f(y)))^j$
$= h_0 (\sum b_{ij} f(x)^i f(y)^j)$,
where $a_{ij}, b_{ij} \in k$.
This shows that $x,y \in \Img(h_0)= \Img(f)$, 
so $\Img(f) =k[x,y]$, and $f$ is invertible.

$\Longleftarrow$: $f$ is invertible,
so $\Img(f)= k[x,y]$.
Therefore, $f$ satisfies the $\alpha$ restriction condition, since trivially
$\alpha(f(x)) \in k[x,y]= \Img(f)$ and $\alpha(f(y)) \in k[x,y]= \Img(f)$.
\end{proof}
Theorem \ref{p symmetric alpha} and its corollary \ref{p symmetric delta}
will be applied in the proofs of Theorem \ref{1}
and Theorem \ref{2}; the proof of Theorem \ref{p symmetric alpha} 
relies on a theorem of Cheng-Mckay-Wang ~\cite[Theorem 1]{wang younger}
and on the $\alpha$ restriction theorem \ref{alpha res thm}
(Theorem \ref{alpha res thm} relies on Theorem 
\ref{alpha true} ~\cite[Proposition 4.1]{vered valqui},
as we have just seen).

Before bringing Theorem \ref{p symmetric alpha}, we wish to discuss
Cheng-Mckay-Wang's theorem ~\cite[Theorem 1]{wang younger} 
which says the following:
``Let $L$ be the field of complex numbers. Assume $A,B \in L[x,y]$ satisfy
$\Jac(A,B) \in L^*$.
If $R \in L[x,y]$ satisfies $\Jac(A,R)= 0$, then $R \in L[A]$".
In other words, C-M-W's theorem ~\cite[Theorem 1]{wang younger} says that 
``the centralizer with respect to the Jacobian"
of an element $A \in \mathbb{C}[x,y]$ which has a Jacobian mate,
equals $\mathbb{C}[A]$.
By definition, a Jacobian mate of an element $A \in k[x,y]$ is
an element $B \in k[x,y]$ such that $\Jac(A,B) \in k^*$.
Its analogous result in the first Weyl algebra over any characteristic zero field,
not necessarily the field of complex numbers, 
can be found in ~\cite[Theorem 2.11]{ggv};
instead of the Jacobian take the commutator.

The following Lemma shows that C-M-W's theorem over $L=\mathbb{C}$ implies
C-M-W's theorem over $L=\mathbb{R}$:

\begin{lemma}\label{cmw}
Assume $A,B \in \mathbb{R}[x,y]$ satisfy
$\Jac(A,B) \in \mathbb{R}^*$.
If $R \in \mathbb{R}[x,y]$ satisfies $\Jac(A,R)= 0$, 
then $R \in \mathbb{R}[A]$.
\end{lemma}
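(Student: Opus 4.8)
The plan is to deduce the real statement from the complex one by extension of scalars followed by a descent of the coefficients. First I would observe that $\mathbb{R}[x,y]$ sits inside $\mathbb{C}[x,y]$ and that the Jacobian bracket $\Jac(\cdot,\cdot)$ is computed by the same formula in both rings (it only involves partial derivatives, sums and products). Hence the hypotheses of the lemma say in particular that $A,B,R\in\mathbb{C}[x,y]$ with $\Jac(A,B)\in\mathbb{R}^*\subseteq\mathbb{C}^*$ and $\Jac(A,R)=0$. Cheng--McKay--Wang's theorem ~\cite[Theorem 1]{wang younger} over $L=\mathbb{C}$ then applies and yields $R\in\mathbb{C}[A]$, say $R=\sum_{i=0}^{n}c_iA^i$ with $c_0,\dots,c_n\in\mathbb{C}$.

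It then remains to check that the $c_i$ may be taken in $\mathbb{R}$. I would first record that $A$ is non-constant: if $A\in\mathbb{C}$ then $\Jac(A,B)=0$, contradicting $\Jac(A,B)\in\mathbb{R}^*$. A non-constant polynomial is transcendental over the ground field, so the powers $1,A,A^2,\dots$ are linearly independent over $\mathbb{C}$. Now let $\kappa\colon\mathbb{C}[x,y]\to\mathbb{C}[x,y]$ be the ring automorphism conjugating the coefficients and fixing $x$ and $y$; it restricts to the identity on $\mathbb{R}[x,y]$, so $\kappa(R)=R$ and $\kappa(A)=A$. Applying $\kappa$ to $R=\sum_{i=0}^{n}c_iA^i$ gives $R=\sum_{i=0}^{n}\overline{c_i}A^i$, and subtracting the two expressions yields $\sum_{i=0}^{n}(c_i-\overline{c_i})A^i=0$. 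By the $\mathbb{C}$-linear independence of the powers of $A$ this forces $c_i=\overline{c_i}$, i.e. $c_i\in\mathbb{R}$ for every $i$, and therefore $R\in\mathbb{R}[A]$.

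I do not anticipate a genuine obstacle here: all the substantive content is packaged inside Cheng--McKay--Wang's theorem over $\mathbb{C}$, which we are entitled to invoke, and the descent step is a routine Galois-type argument. The only point deserving a word of care is the $\mathbb{C}$-linear independence of $\{A^i\}_{i\ge 0}$, but this is immediate once one notes that $A$ is non-constant, which is forced by $\Jac(A,B)\in\mathbb{R}^*$. (The same argument in fact shows, for any field $k$ of characteristic zero, that if the C-M-W statement holds over $\overline{k}$ then it holds over $k$, since $\overline{k}/k$ is Galois with fixed field $k$.)
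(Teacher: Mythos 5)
Your proof is correct, and the first half (pass to $\mathbb{C}[x,y]$, invoke Cheng--McKay--Wang over $\mathbb{C}$ to get $R=\sum_{i}c_iA^i$ with $c_i\in\mathbb{C}$) is exactly what the paper does. Where you diverge is the descent step. The paper descends by an induction on leading coefficients: writing $A=\sum a_{vw}x^vy^w$ and $R=\sum r_{ij}x^iy^j$ with real coefficients, it identifies the top coefficient of $\sum_u c_uA^u$ as $c_ta_{nm}^t$ and that of $R$ as $r_{kl}$, concludes $c_t=r_{kl}/a_{nm}^t\in\mathbb{R}$, subtracts $c_tA^t$, and iterates down to $c_0$. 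You instead apply the coefficientwise complex-conjugation automorphism $\kappa$ of $\mathbb{C}[x,y]$, which fixes $A$ and $R$, to get $\sum_i(c_i-\overline{c_i})A^i=0$, and then use that $A$ is non-constant (forced by $\Jac(A,B)\in\mathbb{R}^*$), hence transcendental over $\mathbb{C}$, so its powers are linearly independent and all $c_i$ are real. Your Galois-type argument is cleaner: it sidesteps the paper's slightly delicate bookkeeping about what the leading term of $A^u$ is for a bivariate $A$, and it makes the generalization to an arbitrary characteristic-zero field $L$ inside $\overline{L}$ (the paper's Lemma \ref{lemma char 0}) immediate, since $\overline{L}/L$ is Galois with fixed field $L$. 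The paper's argument buys a little more explicitness (the $c_i$ are exhibited as ratios of real quantities) and avoids any appeal to Galois theory, but both routes rest on the same nontrivial input, namely C-M-W over $\mathbb{C}$, and both require the small observation you make explicit -- and the paper leaves implicit -- that the powers of $A$ are linearly independent, equivalently that $A\notin\mathbb{C}$.
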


\begin{proof}
$A,B \in \mathbb{R}[x,y] \subset \mathbb{C}[x,y]$ satisfy
$\Jac(A,B) \in \mathbb{R}^* \subset \mathbb{C}^*$,
so C-M-W's theorem implies that $R \in \mathbb{C}[A]$
(of course, $R \in \mathbb{R}[x,y] \subset \mathbb{C}[x,y]$).
Hence $R= \sum_{u=0}^{t} c_u A^u$ for some $c_u \in \mathbb{C}$.
We claim that actually $c_u \in \mathbb{R}$:
On the one hand, $R \in \mathbb{R}[x,y]$, so we can write
$R= \sum_{i=0}^{k} \sum_{j=0}^{l} r_{ij} x^i y^j$ for some 
$r_{ij} \in \mathbb{R}$.
On the other hand, $R= \sum_{u=0}^{t} c_u A^u$ 
for some $c_u \in \mathbb{C}$.
Therefore $\sum^{k} \sum^{l} r_{ij} x^i y^j = \sum^{t} c_u A^u$.
We can write
$\mathbb{R}[x,y] \ni A= 
\sum_{v=0}^{n} \sum_{w=0}^{m} a_{vw} x^v y^w$,
where $a_{vw} \in \mathbb{R}$.
So $\sum^{k} \sum^{l} r_{ij} x^i y^j =
\sum_{u=0}^{t} c_u (\sum^{n} \sum^{m} a_{vw} x^v y^w)^{u}$.
The leading coefficient on the right is $c_t a_{nm}^t$,
and the leading coefficient on the left is $r_{kl}$.
Hence $r_{kl}=c_t a_{nm}^t$, so $c_t= r_{kl}/ (a_{nm}^t) \in \mathbb{R}$,
since $r_{kl} \in \mathbb{R}$ and $a_{nm} \in \mathbb{R}$.
Now consider
$\mathbb{R}[x,y] \ni \sum \sum r_{ij} x^i y^j - c_t A^t =
\sum_{u=0}^{t-1} c_u (\sum \sum a_{vw} x^v y^w)^u$,
and similarly obtain that $c_{t-1} \in \mathbb{R}$.
Indeed, the leading coefficient on the right is $c_{t-1} a_{nm}^{t-1}$,
and the leading coefficient on the left is some $d \in \mathbb{R}$.
Hence $c_{t-1} = d/ (a_{nm}^{t-1}) \in \mathbb{R}$.
Continuing this way we obtain that $c_0,c_1\ldots,c_{t-1},c_t$
are all in $\mathbb{R}$.
Therefore, $R= \sum_{u=0}^{t} c_u A^u \in \mathbb{R}[A]$.
\end{proof}
As for the validity of C-M-W's theorem 
over fields other than $\mathbb{C}$ or $\mathbb{R}$,
perhaps it is still valid over any field of characteristic zero,
as the two following lemmas may show. 
However, the first lemma \ref{lemma alg clo char 0} 
does not have a complete proof, 
only a sketch of proof, 
so it may happen that the first lemma is not true.

\begin{lemma}\label{lemma alg clo char 0}
Let $L$ be an algebraically closed field of characteristic zero.
Assume $A,B \in L[x,y]$ satisfy
$\Jac(A,B) \in L^*$.
If $R \in L[x,y]$ satisfies $\Jac(A,R)= 0$, then $R \in L[A]$.
\end{lemma}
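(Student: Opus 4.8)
The plan is to reduce the lemma to C-M-W's theorem over $\mathbb{C}$ by a Lefschetz-type specialization, in the same spirit as the reduction in Lemma \ref{cmw} but now passing from an abstract algebraically closed field of characteristic zero \emph{down} to $\mathbb{C}$ rather than from $\mathbb{C}$ down to $\mathbb{R}$. First I would let $L_0 \subseteq L$ be the subfield generated over $\mathbb{Q}$ by the finitely many coefficients of $A$, $B$ and $R$. Then $L_0$ is finitely generated over $\mathbb{Q}$, so it has finite transcendence degree over $\mathbb{Q}$; since $\mathbb{C}$ has infinite transcendence degree over $\mathbb{Q}$ and is algebraically closed, $L_0$ embeds into $\mathbb{C}$ (send a transcendence basis of $L_0$ to algebraically independent complex numbers and then extend over the remaining algebraic part). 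Fix such a field embedding $\iota\colon L_0 \hookrightarrow \mathbb{C}$.

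Next I would apply $\iota$ coefficientwise to obtain $A^{\iota}, B^{\iota}, R^{\iota} \in \mathbb{C}[x,y]$. Since the Jacobian is a polynomial with integer coefficients in the coefficients of its two arguments, $\Jac(A^{\iota},B^{\iota}) = \iota(\Jac(A,B))$ and $\Jac(A^{\iota},R^{\iota}) = \iota(\Jac(A,R))$. The scalar $\Jac(A,B) \in L^{*}$, being a polynomial expression in the coefficients of $A$ and $B$, already lies in $L_0$, so $\Jac(A,B) \in L_0^{*}$ and hence $\Jac(A^{\iota},B^{\iota}) \in \mathbb{C}^{*}$; likewise $\Jac(A^{\iota},R^{\iota}) = \iota(0) = 0$. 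C-M-W's theorem ~\cite[Theorem 1]{wang younger} over $\mathbb{C}$ then yields $R^{\iota} \in \mathbb{C}[A^{\iota}]$.

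The last step is to descend this conclusion back to $L$. Since $A$ has a Jacobian mate it is non-constant, so $\deg A^{\iota} = \deg A \ge 1$ and, with $t := \deg R$, any expression of $R$ as a polynomial in $A$ uses at most the powers $1, A, \dots, A^{t}$ (and the same is true for $R^{\iota}$ in $A^{\iota}$, by the same degree count). Hence ``$R \in L_0[A]$'' is equivalent to the solvability over $L_0$ of an inhomogeneous linear system $M\mathbf{d} = \mathbf{b}$ in the unknowns $d_0,\dots,d_t$, where the columns of $M$ are the coefficient vectors (in the monomial basis of $L[x,y]$) of $1, A, \dots, A^{t}$ and $\mathbf{b}$ is the coefficient vector of $R$; the entries of $M$ and $\mathbf{b}$ lie in $L_0$. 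Solvability of such a system is equivalent to $\operatorname{rank} M = \operatorname{rank}[\,M \mid \mathbf{b}\,]$, and the rank of a matrix over a field is preserved by a field isomorphism and by a field extension. Over $\mathbb{C}$ the system $M^{\iota}\mathbf{d} = \mathbf{b}^{\iota}$ is solvable (that is exactly $R^{\iota} \in \mathbb{C}[A^{\iota}]$ together with the degree bound), so $\operatorname{rank}_{\mathbb{C}} M^{\iota} = \operatorname{rank}_{\mathbb{C}}[\,M^{\iota}\mid \mathbf{b}^{\iota}\,]$; transporting this equality back through the isomorphism $\iota$ gives $\operatorname{rank}_{L_0} M = \operatorname{rank}_{L_0}[\,M \mid \mathbf{b}\,]$, whence $M\mathbf{d} = \mathbf{b}$ is solvable over $L_0$ and $R \in L_0[A] \subseteq L[A]$.

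I expect the genuine content to be entirely in C-M-W's theorem over $\mathbb{C}$ itself, which is being assumed here, so there is no serious obstacle; the care needed — and presumably the reason for offering only a sketch — lies in the bookkeeping: fixing a uniform degree bound $t$ so that the complex solution and the desired $L$-solution have the same shape, noting that the scalar $\Jac(A,B)$ automatically descends to the small field $L_0$, and invoking the (standard) invariance of matrix rank under field extensions and isomorphisms. A slicker alternative I would mention is the model-theoretic Lefschetz principle: for each fixed choice of degree bounds the assertion is a first-order sentence in the language of rings that holds in $\mathbb{C}$, hence in every algebraically closed field of characteristic zero, in particular in $L$.
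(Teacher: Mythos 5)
Your argument is correct, and it takes a genuinely different route from the paper's. The paper's sketch goes \emph{inside} the proof of C-M-W's theorem \cite{wang younger}: it proposes to re-run that proof over an arbitrary algebraically closed field of characteristic zero, reducing the matter to checking that its two external ingredients, \cite[Theorem 1]{wang deri} and \cite[Corollary 1.5, p.~74]{bass2}, remain valid in that generality; the author explicitly flags this verification as incomplete, which is why the lemma is offered only with a sketch. You instead treat the $\mathbb{C}$ case as a black box and transport the statement: embed the finitely generated field $L_0$ spanned by the coefficients of $A,B,R$ into $\mathbb{C}$, apply C-M-W there, and descend $R^{\iota}\in\mathbb{C}[A^{\iota}]$ back to $L_0$ by noting that, with the uniform bound $t=\deg R$ on the powers of $A$ needed (justified correctly, since $\deg A\ge 1$ prevents cancellation of leading terms), membership in $L_0[A]$ is the solvability of a linear system with entries in $L_0$, and solvability is a rank condition invariant under field isomorphism and field extension. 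Every step checks out --- in particular the scalar $\Jac(A,B)$, being a polynomial in the coefficients of $A$ and $B$, does lie in $L_0$ --- so your version upgrades the paper's sketch to an actual proof, conditional only on the published complex case; your closing observation that the whole lemma is an instance of the Lefschetz principle for the complete theory $\mathrm{ACF}_0$ is the cleanest formulation. What the paper's approach would buy, if completed, is potentially more: re-examining C-M-W's proof could extend it to coefficient rings that are not algebraically closed fields (something the author wants later, in the ``second idea''), whereas your transfer argument is confined to algebraically closed fields of characteristic zero --- though that is exactly the generality this lemma claims, and the paper derives the non-closed case separately in Lemma \ref{lemma char 0} anyway.
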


\begin{proof}[Sketch of proof]
The proof of C-M-W's theorem uses ~\cite[Theorem 1]{wang deri}
and ~\cite[Corollary 1.5, p.74]{bass2}; except these two results, it seems 
that the other arguments in C-M-W's proof are valid not only over $\mathbb{C}$
but also over other fields
(we are not sure over which fields the other arguments are valid; 
maybe any commutative rings).
So if one wishes to generalize C-M-W's theorem to an algebraically closed field
of characteristic zero,
it is enough to check that each of ~\cite[Theorem 1]{wang deri}
and ~\cite[Corollary 1.5, p. 74]{bass2} is valid over
an algebraically closed field of characteristic zero.
Indeed, 
in ~\cite[Theorem 1]{wang deri} the base field is any ring,
and in ~\cite[Corollary 1.5, p. 74]{bass2} the base field is 
an algebraically closed field of characteristic zero.
\end{proof}

If we do not have any errors in the sketch of proof of Lemma 
\ref{lemma alg clo char 0}, then similarly to the proof of Lemma \ref{cmw}
(=getting the result over $\mathbb{R}$ from the result over $\mathbb{C}$)
we can obtain the following lemma
(=getting the result over $L$ from the result over $\bar{L}$,
where $L$ is any field of characteristic zero, 
$\bar{L}$ is an algebraic closure of $L$).

\begin{lemma}\label{lemma char 0}
Let $L$ be a field of characteristic zero.
Assume $A,B \in L[x,y]$ satisfy
$\Jac(A,B) \in L^*$.
If $R \in L[x,y]$ satisfies $\Jac(A,R)= 0$, then $R \in L[A]$.
\end{lemma}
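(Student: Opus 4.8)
The plan is to deduce the statement over $L$ from the statement over an algebraic closure $\bar L$ of $L$, by exactly the descent used in the proof of Lemma~\ref{cmw} (where $\mathbb{R}$ was obtained from $\mathbb{C}$), with the role of C-M-W's theorem over $\mathbb{C}$ now played by Lemma~\ref{lemma alg clo char 0}. First I would fix $\bar L$ and view $A,B,R$ as elements of $\bar L[x,y]\supseteq L[x,y]$. Since $\Jac(A,B)\in L^*\subseteq\bar L^*$ and $\Jac(A,R)=0$, and $\bar L$ is an algebraically closed field of characteristic zero, Lemma~\ref{lemma alg clo char 0} applies and yields $R\in\bar L[A]$, say $R=\sum_{u=0}^{t}c_uA^u$ with $c_u\in\bar L$ and, after discarding vanishing top terms, $c_t\neq0$. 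Note that $A$ is not constant, since a constant $A$ would force $\Jac(A,B)=0$, contradicting $\Jac(A,B)\in L^*$.

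It remains to show $c_0,\dots,c_t\in L$, and here I would repeat the leading-coefficient argument of Lemma~\ref{cmw}. Fix the lexicographic monomial order on $k[x,y]$ and let $a_{nm}x^ny^m$ be the leading term of $A$, so that $a_{nm}\in L\setminus\{0\}$ and $(n,m)\neq(0,0)$. Since $\bar L[x,y]$ is a domain, the leading term of $A^u$ is $a_{nm}^{u}x^{nu}y^{mu}$, and the monomials $x^{nu}y^{mu}$, $0\le u\le t$, are pairwise distinct; hence the leading term of $R=\sum_{u}c_uA^u$ is $c_ta_{nm}^{t}x^{nt}y^{mt}$, with no cancellation. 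But $R\in L[x,y]$, so its leading coefficient lies in $L$; comparing gives $c_ta_{nm}^{t}\in L$, whence $c_t=(c_ta_{nm}^{t})/a_{nm}^{t}\in L$. Then $R-c_tA^t\in L[x,y]$ equals $\sum_{u=0}^{t-1}c_uA^u$, and the same step (formally, induction on $t$) gives $c_{t-1},\dots,c_0\in L$. Thus $R=\sum_{u=0}^{t}c_uA^u\in L[A]$.

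The one genuine obstacle is that this argument rests on Lemma~\ref{lemma alg clo char 0}, whose proof in the excerpt is only sketched, so the present lemma is really conditional on it; everything else is routine bookkeeping. I would, however, note that Lemma~\ref{lemma alg clo char 0} can be avoided altogether. The finitely many coefficients appearing in $A$, $B$ and $R$ generate a subfield $L_0\subseteq L$ that is finitely generated over $\mathbb{Q}$, hence of finite transcendence degree, hence admits an embedding $\varphi\colon L_0\hookrightarrow\mathbb{C}$. Applying $\varphi$ coefficientwise to $L_0[x,y]$ and invoking C-M-W's theorem over $\mathbb{C}$ (quoted just before Lemma~\ref{cmw}) shows that the problem of writing $R$ as an $L_0$-linear combination of $1,A,A^2,\dots$ is a linear system over $L_0$ that becomes solvable over $\mathbb{C}$, hence (the rank of a matrix being insensitive to field extension) is already solvable over $L_0$; since $1,A,A^2,\dots$ are $L_0$-linearly independent the solution is unique and has entries in $L_0\subseteq L$, so $R\in L_0[A]\subseteq L[A]$. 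Either route completes the proof.
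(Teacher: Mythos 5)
Your main argument is exactly the paper's: apply Lemma \ref{lemma alg clo char 0} over $\bar L$ to get $R\in\bar L[A]$, then descend to $L$ by the leading-coefficient induction of Lemma \ref{cmw} (your use of a fixed monomial order and the observation that the leading monomials $x^{nu}y^{mu}$ are pairwise distinct actually tightens a point the paper leaves implicit). You also correctly flag what the paper itself concedes, namely that this route is conditional on the merely sketched Lemma \ref{lemma alg clo char 0}. Your second route, however, is genuinely different and is worth highlighting: embedding the finitely generated subfield $L_0\subseteq L$ generated by the coefficients of $A,B,R$ into $\mathbb{C}$, invoking C-M-W's theorem over $\mathbb{C}$ (a published result), and descending via the invariance of the rank of a linear system under field extension. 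That argument makes Lemma \ref{lemma char 0} unconditional, bypassing Lemma \ref{lemma alg clo char 0} entirely, which the paper does not achieve; the only bookkeeping it needs is a degree bound $t\le\deg R$ (available since $A$ is nonconstant) so that the linear system in $c_0,\dots,c_t$ has fixed finite size. If anything, I would promote that second route to the primary proof.
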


\begin{proof}
$A,B \in L[x,y] \subset \bar{L}[x,y]$ satisfy
$\Jac(A,B) \in L^* \subset \bar{L}^*$,
and $R \in L[x,y] \subset \bar{L}[x,y]$
satisfies $\Jac(A,R)= 0$.
{}From Lemma \ref{lemma alg clo char 0} we get that
$R \in \bar{L}[A]$.
Exactly the same arguments as in the proof of Lemma
\ref{cmw} are valid if, instead of working with
$\mathbb{R}$ and $\mathbb{C}$,
we work with $L$ and $\bar{L}$.
\end{proof}
Since in the above discussion there was no precise conclusion whether 
C-M-W's theorem is valid not only over $\mathbb{C}$ and $\mathbb{R}$
but also over any field of characteristic zero
(since Lemma \ref{lemma alg clo char 0} was not fully proved),
we restrict the base field of Theorem 
\ref{p symmetric alpha} to $\mathbb{C}$ or $\mathbb{R}$ 
(because we wish to use C-M-W's theorem in the proof of 
Theorem \ref{p symmetric alpha}).

\begin{theorem}\label{p symmetric alpha}
Let $K$ denote $\mathbb{C}$ or $\mathbb{R}$.
Let $f: K[x,y] \to K[x,y]$ be a morphism that satisfies
$\Jac(f(x),f(y)) \in K^*$.
If one of the following conditions is satisfied,
then $f$ is invertible:
\begin{itemize}
\item [(1)] $f(x)$ is symmetric.
\item [(2)] $f(x)$ is skew-symmetric.
\item [(3)] $f(y)$ is symmetric.
\item [(4)] $f(y)$ is skew-symmetric.
\end{itemize}
Where by symmetric or skew-symmetric we mean symmetric or 
skew-symmetric with respect to $\alpha$.
\end{theorem}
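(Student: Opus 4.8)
The plan is to deduce each of the four cases from the $\alpha$ restriction theorem (Theorem \ref{alpha res thm}, in the $\Jac \in K^*$ form stated right after it). Write $p := f(x)$, $q := f(y)$ and $c := \Jac(p,q) \in K^*$. By symmetry of the hypotheses it is enough to treat cases (1) and (2) in detail: cases (3) and (4) follow by the same argument with the roles of $p$ and $q$ exchanged, using that $\Jac(q,p) = -c \in K^*$ so that $q$ also has a Jacobian mate. So suppose $p$ is symmetric or skew-symmetric, i.e.\ $\alpha(p) = \pm p$. Then $\alpha(p) = \pm p \in \Img(f)$ automatically, and the only thing left to check for the $\alpha$ restriction condition is that $\alpha(q) \in \Img(f)$.

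The key computation is the effect of $\alpha$ on the Jacobian. Since $\alpha(p)$ and $\alpha(q)$ are just $p$ and $q$ with $x$ and $y$ interchanged, the chain rule (equivalently: $\alpha$ is an automorphism with Jacobian $-1$) gives $\Jac(\alpha(p),\alpha(q)) = -\alpha(\Jac(p,q)) = -c$, because $c$ is a scalar. Substituting $\alpha(p) = \pm p$ turns this into $\Jac(p,\alpha(q)) = \mp c$, and therefore $\Jac\bigl(p,\ \alpha(q) \pm q\bigr) = \mp c \pm c = 0$, where the sign is matched to the case at hand ($\alpha(q)+q$ when $p$ is symmetric, $\alpha(q)-q$ when $p$ is skew-symmetric).

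Now I would invoke the Cheng--McKay--Wang theorem ~\cite[Theorem 1]{wang younger} when $K = \mathbb{C}$, and Lemma \ref{cmw} when $K = \mathbb{R}$ --- this is exactly the point at which $K$ must be restricted to $\mathbb{R}$ or $\mathbb{C}$. Applying it with $A := p$ (which has the Jacobian mate $B := q$) and $R := \alpha(q) \pm q \in K[x,y]$, for which $\Jac(p,R) = 0$ by the previous paragraph, we conclude $R \in K[p] \subseteq \Img(f)$. Hence $\alpha(q) = \mp q + R \in \Img(f)$ as well, so $f$ satisfies the $\alpha$ restriction condition and Theorem \ref{alpha res thm} yields that $f$ is invertible. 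For cases (3) and (4) one instead forms $\alpha(p) \pm p$, checks (from $\Jac(\alpha(p),\alpha(q)) = -c$ together with $\alpha(q) = \pm q$) that $\Jac(q,\ \alpha(p) \pm p) = 0$, and applies the centralizer theorem with $A := q$, $B := p$ to place $\alpha(p) \pm p$ in $K[q] \subseteq \Img(f)$.

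I do not expect a genuine obstacle beyond keeping the signs consistent across the four cases: all the depth is carried by C--M--W's centralizer theorem and by the $\alpha$ restriction theorem, both of which are already available to us. The only new observation is that a symmetry hypothesis on a single coordinate of $f$, combined with the identity $\Jac(\alpha(p),\alpha(q)) = -\Jac(p,q)$, forces the one potentially missing $\alpha$-image into $\Img(f)$ after a single application of C--M--W.
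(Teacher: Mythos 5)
Your proposal is correct and follows essentially the same route as the paper: establish the $\alpha$ restriction condition by noting $\alpha(p)=\pm p\in\Img(f)$, compute $\Jac(p,q\pm\alpha(q))=0$ via $\Jac(\alpha(p),\alpha(q))=-\Jac(p,q)$, and apply Cheng--McKay--Wang (Lemma \ref{cmw} over $\mathbb{R}$) to place $\alpha(q)$ in $K[p]+Kq\subseteq\Img(f)$, then conclude by Theorem \ref{alpha res thm}. The sign bookkeeping matches the paper's cases (1) and (2), and your reduction of (3),(4) by exchanging the roles of $p$ and $q$ is the intended one.
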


It is impossible to have both $f(x)$ and $f(y)$ symmetric with respect to $\alpha$
or both skew-symmetric with respect to $\alpha$, see ~\cite[Remark 4.8]{vered}.
If it happens that one of $f(x),f(y)$ is symmetric w.r.t. $\alpha$ 
and the other is skew-symmetric w.r.t $\alpha$, then it is quite immediate 
(as long as we know Theorem \ref{alpha true} ~\cite[Proposition 4.1]{vered valqui})
that such $f$ is invertible, see ~\cite[Remark 4.8]{vered}.

\begin{proof}
For convenience, denote $p:= f(x)$, $q:= f(y)$, and $T:=\Img(f)=K[p,q]$

$(1)$: Assume that $p$ is symmetric w.r.t. $\alpha$, 
namely $\alpha(p)= p$.
So $\alpha(p)= p \in T$.
Denote $a:= \Jac(p,q) \in K^*$.
Clearly,
$\Jac(p,\alpha(q))= \Jac(\alpha(p),\alpha(q)) = -a$.
Then,
$\Jac(p,q +\alpha(q))= \Jac(p,q)+ \Jac(p,\alpha(q))= a-a= 0$,
so from C-M-W's theorem (and our Lemma \ref{cmw},
in case $K=\mathbb{R}$) we have
$q + \alpha(q)= H(p)$ where $H(t) \in K[t]$.
So $\alpha(q)= -q + H(p) \in T$.
We have,
$\alpha(p) \in T$ and $\alpha(q) \in T$,
namely, $f$ satisfies the $\alpha$ restriction condition.
Hence the $\alpha$ restriction theorem \ref{alpha res thm} 
(or its immediate and trivial corollary) implies that $f$ is invertible.
$(2)$: Assume that $p$ is skew-symmetric w.r.t. $\alpha$, 
namely $\alpha(p)= -p$.
So $\alpha(p)= -p \in T$.
Denote $a:= \Jac(p,q) \in K^*$.
Clearly,
$\Jac(-p,\alpha(q))= \Jac(\alpha(p),\alpha(q)) = -a$.
Then,
$\Jac(p,q-\alpha(q))= \Jac(p,q)+\Jac(p,-\alpha(q))= a-a= 0$,
so from C-M-W's theorem (and our Lemma \ref{cmw},
in case $K=\mathbb{R}$) we have
$q - \alpha(q)= H(p)$ where $H(t) \in K[t]$.
So $\alpha(q)= q - H(p) \in T$.
We have,
$\alpha(p) \in T$ and $\alpha(q) \in T$,
namely, $f$ satisfies the $\alpha$ restriction condition.
Hence the $\alpha$ restriction theorem \ref{alpha res thm} 
(or its immediate and trivial corollary)
implies that $f$ is invertible.
\end{proof}

In the proof of Theorem \ref{alpha res thm} we have shown that 
for a morphism $f$ having an invertible Jacobian, if the restriction of $\alpha$ to 
$k[f(x),f(y)]$ is an involution on $k[f(x),f(y)]$, then 
from the Chain Rule one sees that it belongs to the 
conjugacy class of involutions on 
$k[f(x),f(y)]$ having Jacobian $-1$.
In the proof of $(1)$ of Theorem \ref{p symmetric alpha} we have found that 
the restriction of $\alpha$ to $K[f(x),f(y)]$ is as follows:
$\alpha(f(x))=f(x)$ and $\alpha(f(y))= -f(y) + H(f(x))$, 
so, without using the Chain Rule, one immediately gets
that the Jacobian of the restricted $\alpha$ equals $-1$;
of course the partial derivatives are with respect to 
$f(x)$ and $f(y)$,
not with respect to $x$ and $y$.
(In Theorem \ref{alpha res thm} $k$ is any field of characteristic zero,
while in Theorem \ref{p symmetric alpha} $K$ is
$\mathbb{C}$ or $\mathbb{R}$).

\begin{corollary}\label{p symmetric delta}
Let $K$ denote $\mathbb{C}$ or $\mathbb{R}$,
and $\delta \in C_{-1}$.
Let $f: K[x,y] \to K[x,y]$ 
be a morphism that satisfies
$\Jac(f(x),f(y)) \in K^*$.
If one of the following conditions is satisfied,
then $f$ is invertible:
\begin{itemize}
\item [(1)] $f(x)$ is symmetric.
\item [(2)] $f(x)$ is skew-symmetric.
\item [(3)] $f(y)$ is symmetric.
\item [(4)] $f(y)$ is skew-symmetric.
\end{itemize}
Where by symmetric or skew-symmetric we mean symmetric or 
skew-symmetric with respect to $\delta$.
\end{corollary}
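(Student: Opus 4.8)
The plan is to reduce Corollary \ref{p symmetric delta} to Theorem \ref{p symmetric alpha} by conjugating $\delta$ onto the exchange involution $\alpha$, exactly in the spirit of the deduction of Corollary \ref{cor alpha true} from Theorem \ref{alpha true}. Since $\delta \in C_{-1}$ and, by Proposition \ref{two conj classes}, $C_{-1}$ is a single conjugacy class, which contains $\alpha$, there is an invertible morphism $u: K[x,y] \to K[x,y]$ with $\delta = u^{-1}\alpha u$. I would set $g := u f$. By the Chain Rule, together with the invertibility of $u$ and the hypothesis $\Jac(f(x),f(y)) \in K^*$, we get $\Jac(g(x),g(y)) \in K^*$, so $g$ is a morphism of $K[x,y]$ to which Theorem \ref{p symmetric alpha} is applicable.

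Next I would translate each of the four symmetry hypotheses on $f$ into the corresponding $\alpha$-symmetry of $g$. For instance, in case (1) the hypothesis $\delta(f(x)) = f(x)$ reads $u^{-1}\alpha u (f(x)) = f(x)$; applying $u$ and using that $u$ is a morphism gives $\alpha(u(f(x))) = u(f(x))$, that is, $\alpha(g(x)) = g(x)$, so $g(x)$ is symmetric with respect to $\alpha$. In case (2) the identity $\delta(f(x)) = -f(x)$ yields $\alpha(g(x)) = -g(x)$ in the same way, the sign passing through $u$ because $u$ is a $K$-algebra morphism; and cases (3), (4) give $\alpha(g(y)) = \pm g(y)$ identically, since $g(y) = u(f(y))$.

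Then, in case $(i)$ for $i \in \{1,2,3,4\}$, part $(i)$ of Theorem \ref{p symmetric alpha} tells us that $g = u f$ is invertible; hence $f = u^{-1} g$ is invertible, being a composition of invertible morphisms, which is the desired conclusion.

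I do not expect a genuine obstacle: this is the standard transport of structure along a conjugating automorphism. The only points that merit a line of care are that the Jacobian hypothesis is preserved under composition with $u$ (Chain Rule plus invertibility of $u$), and that the sign in the skew-symmetric cases commutes with $u$, which it does by $K$-linearity of $u$. One could also state the four cases uniformly: a $\delta$-symmetry or $\delta$-skew-symmetry of $f(x)$ (resp. $f(y)$) becomes an $\alpha$-symmetry or $\alpha$-skew-symmetry of $g(x)$ (resp. $g(y)$), and Theorem \ref{p symmetric alpha} finishes the argument.
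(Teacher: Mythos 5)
Your proposal is correct and follows essentially the same route as the paper: the paper also writes $\delta = g^{-1}\alpha g$ for an invertible $g$, observes that the $\delta$-(skew-)symmetry of $f(x)$ or $f(y)$ becomes $\alpha$-(skew-)symmetry of $(gf)(x)$ or $(gf)(y)$, checks the Jacobian condition for $gf$ via the Chain Rule, and applies Theorem \ref{p symmetric alpha} to $gf$ before concluding that $f = g^{-1}(gf)$ is invertible. Your $u$ plays exactly the role of the paper's conjugating morphism $g$.
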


\begin{proof}
$(1)$: Assume that $f(x)$ is symmetric w.r.t. $\delta$, 
namely $\delta(f(x))= f(x)$.
Since $\delta \in C_{-1}$, there exists an invertible morphism of 
$K[x,y]$, $g$, such that
$\delta = g^{-1} \alpha g$.
Hence $\delta(f(x))= f(x)$ becomes
$(g^{-1} \alpha g)(f(x))= f(x)$,
so 
$\alpha((gf)(x))= (gf)(x)$.
It is clear that the morphism $gf$ has an invertible Jacobian
(since each of $g$ and $f$ has an invertible Jacobian).
Therefore, we can apply Theorem \ref{p symmetric alpha} condition $(1)$ 
to $gf$, and get that $gf$ is invertible.
Then clearly $f$ is invertible.

$(2)$: Assume that $f(x)$ is skew-symmetric w.r.t. $\delta$, 
namely $\delta(f(x))= - f(x)$.
Since $\delta \in C_{-1}$, 
there exists an invertible morphism of 
$K[x,y]$, $g$, such that
$\delta = g^{-1} \alpha g$.
Hence $\delta(f(x))= - f(x)$ becomes
$(g^{-1} \alpha g)(f(x))= - f(x)$,
so 
$\alpha((gf)(x))= - (gf)(x)$.
It is clear that the morphism $gf$ has an invertible Jacobian
(since each of $g$ and $f$ has an invertible Jacobian).
Therefore, we can apply Theorem \ref{p symmetric alpha} condition $(2)$ 
to $gf$, and get that $gf$ is invertible.
Then clearly $f$ is invertible.
\end{proof}

\section{Our two observations}

In theorem \ref{1} we take $\mathbb{C}$ as a base field, while in Theorem \ref{2}
we take $\mathbb{C}$ or $\mathbb{R}$ as a base field; both proofs rely on 
Corollary \ref{p symmetric delta} which is over $\mathbb{C}$ or $\mathbb{R}$,
but in the proof of Theorem \ref{1} we also wish that
square roots of elements of the base field belong to the base field, 
so we dismiss of $\mathbb{R}$.

If it will turn out that Lemma \ref{lemma char 0}
(which relies on Lemma \ref{lemma alg clo char 0}) is true,
then Theorem \ref{p symmetric alpha} and its corollary \ref{p symmetric delta}
are valid over any field of characteristic zero, 
not only over $\mathbb{C}$ or $\mathbb{R}$,
and then Theorem \ref{2} is valid over any field of characteristic zero
and Theorem \ref{1} is valid over any field of characteristic zero
which has the property that square roots of elements of it belong to it
(for example, an algebraically closed field of characteristic zero).

$k$ continues to denote a field of characteristic zero.

\begin{lemma}[Degree 1 case]\label{degree 1 case}
Let $f: k[x,y] \to k[x,y]$ be a morphism 
that satisfies
$\Jac(f(x),f(y)) \in k^*$.
If one of $f(x),f(y)$ is of degree $1$,
then $f$ is invertible.
\end{lemma}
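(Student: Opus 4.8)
The plan is to reduce, by composing with suitable automorphisms, to the trivial situation in which the degree-one coordinate is literally $x$, and then to read off the other coordinate directly from the Jacobian hypothesis. First I would arrange that it is $f(x)$, and not $f(y)$, that has degree $1$: if instead $\deg f(y)=1$, replace $f$ by $f\alpha$, so that $(f\alpha)(x)=f(\alpha(x))=f(y)$ has degree $1$, $(f\alpha)(y)=f(x)$, and $\Jac((f\alpha)(x),(f\alpha)(y))=\Jac(f(y),f(x))=-\Jac(f(x),f(y))\in k^*$; since $\alpha$ is an involution, $f$ is invertible if and only if $f\alpha$ is. So from now on assume $\deg f(x)=1$, say $f(x)=\ell+c$ where $\ell\in kx+ky$ is a nonzero linear form and $c\in k$.

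Next I would pick an affine automorphism $\psi$ of $k[x,y]$ with $\psi(x)=f(x)$: complete $\ell$ to a basis $\{\ell,\ell'\}$ of $kx+ky$ and let $\psi$ be the affine map whose linear part sends $x\mapsto\ell$, $y\mapsto\ell'$, with the constant $c$ added to $\psi(x)$; its linear part is invertible, so $\psi$ is an automorphism. Put $\tilde f:=\psi^{-1}f$. By the Chain Rule (as in the proof of Corollary \ref{cor alpha true}) one has $\Jac(\tilde f(x),\tilde f(y))\in k^*$, while $\tilde f(x)=\psi^{-1}(f(x))=x$. Since $\psi$ is invertible, $f$ is invertible if and only if $\tilde f$ is, so it suffices to treat the case $f(x)=x$.

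Finally, assume $f(x)=x$. Then $\Jac(f(x),f(y))=\partial_y(f(y))$, which by hypothesis is a nonzero constant $\lambda\in k^*$; inspecting monomials, this forces $f(y)=\lambda y+g(x)$ for some $g\in k[x]$. Hence $y=\lambda^{-1}(f(y)-g(x))\in k[x,f(y)]=k[f(x),f(y)]=\Img(f)$, and trivially $x=f(x)\in\Img(f)$, so $\Img(f)=k[x,y]$; by the criterion recalled in the Preliminaries (a morphism $g$ of $k[x,y]$ is invertible if and only if $\Img(g)=k[x,y]$) we conclude that $f$ is invertible. The argument is entirely elementary and I do not foresee a genuine obstacle; the only points needing (routine) care are checking that completing $\ell$ to a basis indeed yields an affine automorphism with $\psi(x)=f(x)$, and the observation that $\partial_y(f(y))\in k^*$ forces $f(y)$ to be of the form $\lambda y+g(x)$ with $\lambda$ a nonzero scalar and $g\in k[x]$.
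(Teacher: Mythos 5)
Your proof is correct. It follows the route the paper only names as its ``first proof: by a direct computation of the Jacobian'' but does not write out: after normalizing the degree-one coordinate to $x$ by an affine automorphism, the identity $\Jac(x,f(y))=\partial_y f(y)\in k^*$ forces (in characteristic zero) $f(y)=\lambda y+g(x)$, whence invertibility is immediate. The paper's detailed argument is instead its ``second proof'', which keeps $(gf)(x)=x+e$, decomposes $(gf)(y)$ into homogeneous components and invokes Cheng--McKay--Wang's centralizer theorem to conclude $(gf)(y)=ay+H(x)$; that version is a priori tied to $\mathbb{C}$ (or $\mathbb{R}$ via Lemma \ref{cmw}), with a third variant quoting van den Essen's Lemma 10.2.4(i) to regain generality. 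Your argument is entirely elementary, needs no external theorem, and is valid over any field of characteristic zero, so it is at least as strong as any of the paper's three variants. Two further small points in your favour: you explicitly reduce the case $\deg f(y)=1$ to $\deg f(x)=1$ by precomposing with $\alpha$, and your normalization $\psi$ handles an arbitrary nonzero linear part in one stroke, whereas the paper's displayed substitution $g(x)=(1/a)(x-y)$, $g(y)=(1/b)y$ is only written for the case where both linear coefficients $a,b$ are nonzero and the remaining (easier) cases are left tacit.
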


\begin{proof}
Write 
$f(x)= ax+by+e$,
$f(y)= \sum_{i=0}^{n} \sum_{j=0}^{m} c_{ij} x^iy^j$,
where $a,b,e,c_{ij} \in k, c_{nm} \neq 0$.
If both $a$ and $b$ are non-zero, then we can define
$g(x):= (1/a)(x-y)$, $g(y):=(1/b)y$,
and get
$(gf)(x)= g(f(x))= g(ax+by+e)$
$=ag(x)+bg(y)+e= a(1/a)(x-y)+b(1/b)y+e$
$=x-y+y+e=x+e$.
$gf(y)= \sum_{i=0}^{N} \sum_{j=0}^{M} d_{ij} x^iy^j$,
for some $d_{ij} \in k, d_{NM} \neq 0$.
We will work with $gf$, show that $gf$ is invertible, 
and then obviously $f$ is invertible.

\textbf{First proof:} By a direct computation of the Jacobian.

\textbf{Second proof:} This proof is over $\mathbb{C}$, 
since we use C-M-W's theorem (however, C-M-W's theorem seems to be 
valid over any field of characteristic zero, or at least 
over $\mathbb{C}$ and $\mathbb{R}$,
hence also this proof).
$(gf)(x)= x+e$ has a Jacobian mate
(namely $y$). 
Write $(gf)(y)= Q_0+Q_1+\ldots+Q_r$,
where $Q_j$ is the $(1,1)$-homogeneous component of $(gf)(y)$
of $(1,1)$-degree $j$.
$k^* \ni \Jac(x+e, Q_0+Q_1+\ldots+Q_r)$
$= \Jac(x+e, Q_1)+ \ldots+ \Jac(x+e, Q_r)$.
From considerations of degree
(the $(1,1)$-degree of $\Jac(x+e, Q_j)$ equals $j-1$,
$1 \leq j \leq r$)
we have:
$\Jac(x+e, Q_1) \in k^*$,
$\Jac(x+e, Q_j)=0$ for $2 \leq j \leq r$.
Apply C-M-W's theorem to
$\Jac(x+e, Q_j)=0$ for $2 \leq j \leq r$,
and get that 
$Q_j \in k[x+e]=k[x]$ for $2 \leq j \leq r$.
So,
$Q_j = H_j(x)$, $H_j(t) \in k[t]$
for $2 \leq j \leq r$.
If $\Jac(x+e, Q_1) = a \in k^*$,
then $Q_1 = ay + H_1(x)$,
where $H_1(t) \in k[t]$.
Concluding that 
$(gf)(y)= Q_0+Q_1+\ldots+Q_r$
$=Q_0+ay+H_1(x)+H_2(x)+\ldots+H_r(x)$.
So,
$(gf)(y)= ay+H(x)$.
($H(x)=Q_0+H_1(x)+H_2(x)+\ldots+H_r(x)$ is a polynomial in $x$ over $k$).
Clearly such $gf$ is invertible.
If C-M-W's theorem is valid over any field of characteristic zero,
then so is this second proof.

\textbf{Third proof:} This proof is valid over any field of 
characteristic zero, since we use
~\cite[Lemma 10.2.4 (i)]{essen} which is valid over any field of characteristic zero.
Somewhat similar to the second proof,
use ~\cite[Lemma 10.2.4 (i)]{essen} 
(see also ~\cite[Proposition 2.1 (b)]{GGV})
instead of C-M-W's theorem.
\end{proof}

\subsection{First observation}
First we wish to prove the following proposition,
which was mentioned in the introduction:

\begin{proposition}[A special case of Wang's theorem]\label{prop wang 2}
Let $f: k[x,y] \to k[x,y]$ be a morphism 
that satisfies
$\Jac(f(x),f(y)) \in k^*$.
If the degree of $f(x)$ is $\leq 2$ and
the degree of $f(y)$ is $\leq 2$,
then $f$ is invertible. 
\end{proposition}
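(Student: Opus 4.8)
The plan is to bootstrap from the degree-one case, Lemma~\ref{degree 1 case}. Write $p := f(x)$ and $q := f(y)$. If $\deg p \le 1$ or $\deg q \le 1$ we are done at once by Lemma~\ref{degree 1 case}, so assume henceforth that $\deg p = \deg q = 2$. Decompose $p = p_2 + p_1 + p_0$ and $q = q_2 + q_1 + q_0$ into $(1,1)$-homogeneous components; by assumption $p_2 \ne 0$ and $q_2 \ne 0$. The first step is to extract the top of the Jacobian: the $(1,1)$-homogeneous component of $\Jac(p,q)$ of $(1,1)$-degree $2$ is precisely $\Jac(p_2,q_2)$, whereas $\Jac(p,q) \in k^*$ has $(1,1)$-degree $0$; hence $\Jac(p_2,q_2) = 0$.

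The second step, the only one that is not pure bookkeeping, is the observation that two nonzero binary forms of the same degree with vanishing Jacobian must be proportional. For this I would dehomogenize by $t = y/x$: a nonzero form $F$ of degree $d$ can be written $F = x^d\,u(y/x)$ with $u \in k[t]\setminus\{0\}$, and a short computation yields, for $F = x^d\,u(y/x)$ and $G = x^d\,v(y/x)$,
\[
\Jac(F,G) \;=\; d\, x^{2d-2}\,\bigl(u(t)v'(t) - u'(t)v(t)\bigr)\big|_{t = y/x}.
\]
Hence $\Jac(F,G) = 0$ forces the Wronskian $uv' - u'v$ to vanish; since $k$ has characteristic zero and $u \ne 0$ this gives $(v/u)' = 0$, so $v = c\,u$ with $c \in k$, i.e. $G = cF$. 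Applying this with $d = 2$, $F = p_2$, $G = q_2$ yields $q_2 = \mu\,p_2$ for some $\mu \in k$, with $\mu \ne 0$ because $q_2 \ne 0$.

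Finally I would clear the leading form of $q$ by an affine automorphism of the source. Let $\phi$ be the linear (hence invertible) morphism of $k[x,y]$ with $\phi(x) = x$ and $\phi(y) = y - \mu x$. Then $(f\phi)(x) = p$ and $(f\phi)(y) = f(y - \mu x) = q - \mu p$, whose $(1,1)$-degree-$2$ component is $q_2 - \mu p_2 = 0$, so $\deg(q - \mu p) \le 1$; moreover $\Jac\bigl(p,\, q - \mu p\bigr) = \Jac(p,q) - \mu\,\Jac(p,p) = \Jac(p,q) \in k^*$, which also forces $q - \mu p$ to be non-constant, hence of degree exactly $1$. Thus $f\phi$ meets the hypotheses of Lemma~\ref{degree 1 case} (its Jacobian lies in $k^*$ and its second coordinate has degree $1$), so $f\phi$ is invertible, whence $f = (f\phi)\phi^{-1}$ is invertible as well. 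I expect no genuine obstacle: the content sits in the elementary binary-forms fact of the second step, and the whole argument uses nothing about $k$ beyond characteristic zero, matching the remark in the introduction that this special case of Wang's theorem is easy for $n = 2$ over a field of characteristic zero.
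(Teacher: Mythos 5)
Your proof is correct, and its skeleton is the same as the paper's: extract $\Jac(p_2,q_2)=0$ from the top $(1,1)$-degree of the Jacobian, conclude that the leading forms are proportional, kill one leading form by a linear automorphism, and finish with Lemma~\ref{degree 1 case}. The one genuine difference is how proportionality is obtained. The paper cites \cite[Lemma 10.2.4 (i)]{essen} (equivalently \cite[Proposition 2.1 (b)]{GGV}), which is a more general statement about two homogeneous polynomials with vanishing Jacobian --- they need not have the same degree, and the conclusion is that both are scalar multiples of powers of a common form $R$; this is why the paper's proof splits into the two sub-cases $\deg R=1$ and $\deg R=2$, which it then merges anyway by setting $\tilde R=R^2$ or $\tilde R=R$. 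You instead prove exactly the special case needed (equal degrees $\Rightarrow$ proportional) by dehomogenizing and computing the Wronskian $uv'-u'v$; I checked the identity $\Jac(F,G)=d\,x^{2d-2}(uv'-u'v)$ and the characteristic-zero step $(v/u)'=0\Rightarrow v/u\in k$, and both are fine. This buys a self-contained, citation-free argument valid over any field of characteristic zero, and it also makes the paper's parenthetical about $\mu=0$ unnecessary, since in your formulation $\mu\neq 0$ is forced by $q_2\neq 0$. The trade-off is only generality: the cited lemma is the tool one needs when the two leading forms have different degrees, a situation that does not arise here.
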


\begin{proof}
Write:
$f(x)=p_0+p_1+p_2$,
$f(y)=q_0+q_1+q_2$,
where $p_i,q_i$ is the $(1,1)$-homogeneous component
of $f(x),f(y)$ respectively, having $(1,1)$-degree $i$.
$k^* \ni \Jac(f(x),f(y))=\Jac(p_0+p_1+p_2,q_0+q_1+q_2)$.
{}From considerations of $(1,1)$-degrees,
$\Jac(p_2,q_2) = 0$,
hence from ~\cite[Lemma 10.2.4 (i)]{essen} or
~\cite[Proposition 2.1 (b)]{GGV} it follows that there 
exists a $(1,1)$-homogeneous element $R \in k[x,y]$,
such that:\begin{itemize}
\item [(1)] If $R$ is of degree $1$, then
$p_2= \lambda R^2$ and
$q_2= \mu R^2$ for some
$\lambda, \mu \in k^*$.
\item [(2)] If $R$ is of degree $2$, then
$p_2= \lambda R$ and
$q_2= \mu R$ for some
$\lambda, \mu \in k^*$
\end{itemize}
In each case it is not difficult to obtain that $f$
is invertible;
just take 
$(fg)(x)=f(g(x))=f(x-(\lambda/\mu)y)$
$=f(x)-(\lambda/\mu)f(y)$
$=p_0+p_1+p_2-(\lambda/\mu)(q_0+q_1+q_2)$
$=p_0+p_1+\lambda \tilde{R} -(\lambda/\mu)(q_0+q_1+\mu \tilde{R})$
$=p_0+p_1 -(\lambda/\mu)(q_0+q_1)$,
where $\tilde{R}= R^2$ for $(1)$,
$\tilde{R}= R$ for $(2)$,
and $g$ is the invertible morphism defined by:
$g(x):= x- (\lambda/\mu)y$
and $g(y):= y$.
(Notice that if $\mu=0$, then $f(y)$ is of degree $1$,
and then Lemma \ref{degree 1 case} immediately shows that
$f$ is invertible).
We can apply Lemma \ref{degree 1 case} to $fg$
($(fg)(x)$ is of degree $1$), get that $fg$ is invertible,
and then $f$ is invertible.
\end{proof}

Now we are finally in the position to bring:

\begin{theorem}[Our first observation]\label{1}
Let $f: \mathbb{C}[x,y] \to \mathbb{C}[x,y]$ be a morphism 
that satisfies
$\Jac(f(x),f(y)) \in \mathbb{C}^*$.
If the degree of $f(x)$ or the degree of $f(y)$ is $\leq 2$,
then $f$ is invertible. 
\end{theorem}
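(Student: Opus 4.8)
The plan is to push everything back to Corollary \ref{p symmetric delta} by normalizing $p := f(x)$ with automorphisms. First I would dispose of the ``or'': if instead the degree of $f(y)$ is $\leq 2$, replace $f$ by $f\alpha$; this has the same invertibility status as $f$ (because $\alpha^2 = 1$), still satisfies $\Jac((f\alpha)(x),(f\alpha)(y)) \in \mathbb{C}^*$ by the Chain Rule, and its first coordinate $(f\alpha)(x) = f(y)$ now has degree $\leq 2$. So assume from now on that $\deg p \leq 2$. If $\deg p = 0$ then $p \in \mathbb{C}$ and $\Jac(p,q) = 0$, contradicting $\Jac(p,q) \in \mathbb{C}^*$, so this does not occur; if $\deg p = 1$, Lemma \ref{degree 1 case} gives at once that $f$ is invertible. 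Hence the real case is $\deg p = 2$.

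Write $p = p_0 + p_1 + p_2$ with $p_i$ the $(1,1)$-homogeneous component of degree $i$ and $p_2 \neq 0$. Over $\mathbb{C}$ every nonzero binary quadratic form factors into two linear forms (this is exactly where one uses that square roots of complex numbers are again complex, and is why $\mathbb{R}$ must be excluded): either $p_2 = c\,\ell^2$ for a linear form $\ell$, or $p_2 = c\,\ell_1\ell_2$ with $\ell_1,\ell_2$ linearly independent, $c \in \mathbb{C}^*$. Accordingly there is a linear automorphism $L$ of $\mathbb{C}[x,y]$ such that, after replacing $f$ by $Lf$ (which keeps the Jacobian in $\mathbb{C}^*$ and keeps the first coordinate of degree $2$), the degree-$2$ part of the first coordinate is $cx^2$ in the first case and $cxy$ in the second.

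Now kill the lower-degree terms by a translation, composed on the left. In the first case $p = cx^2 + ax + by + e$; the affine automorphism $x \mapsto x - a/(2c),\ y \mapsto y$ turns $p$ into $cx^2 + by + e'$, which is symmetric with respect to $\gamma$ (recall $\gamma(x) = -x,\ \gamma(y) = y$, an involution of Jacobian $-1$, so $\gamma \in C_{-1}$). In the second case, using $cxy + ax + by + e = c(x + b/c)(y + a/c) + (e - ab/c)$, the affine automorphism $x \mapsto x - b/c,\ y \mapsto y - a/c$ turns $p$ into $cxy + e'$, which is symmetric with respect to $\alpha \in C_{-1}$. In either case the morphism obtained from $f$ by composing on the left with the relevant linear automorphism and then the relevant translation still has an invertible Jacobian (Chain Rule) and has its first coordinate symmetric with respect to an element of $C_{-1}$, so by Corollary \ref{p symmetric delta}(1) it is invertible; hence $f$ is invertible.

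The only point needing care is bookkeeping: one must confirm that the automorphisms used in the normalization do not spoil the hypotheses, which they do not, since linear automorphisms preserve $(1,1)$-degrees, translations preserve degrees, and composition with an automorphism preserves both the condition $\Jac \in \mathbb{C}^*$ and the property of being (non-)invertible. The apparently degenerate sub-cases --- for instance $b = 0$ in the first case, where $p$ becomes a polynomial in $x$ alone --- are harmless: the symmetry argument applies to them verbatim (and in fact they cannot occur, since $\Jac(cx^2 + e', q) = 2cx\,q_y$ is never a nonzero constant). So there is no genuine obstacle beyond invoking the earlier results in the right order; the one conceptual ingredient is the factorization of binary quadratic forms over $\mathbb{C}$, which is what forces the restriction to $\mathbb{C}$.
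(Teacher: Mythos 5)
Your proposal is correct and reaches the goal by the same underlying strategy as the paper: compose $f$ on the left with explicit automorphisms until the first coordinate becomes symmetric with respect to an involution in $C_{-1}$, and then invoke Corollary \ref{p symmetric delta} (with Lemma \ref{degree 1 case} absorbing the degree-$1$ case). The difference is in how the degree-$2$ case is organized. The paper works directly with the six coefficients of $p=ax^2+bxy+cy^2+dx+ey+r$ and splits into three main cases ($a=c=0$; exactly one of $a,c$ zero; both nonzero), each with several subcases, normalizing the generic quadratic part to $x^2+y^2$ via square roots and then completing squares; you instead factor the leading form $p_2$ over $\mathbb{C}$ into linear forms and distinguish only two cases (repeated factor, normalized to $cx^2$ and made $\gamma$-symmetric; distinct factors, normalized to $cxy$ and made $\alpha$-symmetric). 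This collapses the paper's case tree considerably and isolates cleanly the one place where $\mathbb{C}$ (rather than $\mathbb{R}$) is needed, namely the splitting of a binary quadratic form, which is the same use of square roots the paper makes when it writes $g_1(x)=x/\sqrt{a}$. Your version also explicitly dispatches two points the paper leaves implicit: the reduction of the ``$\deg f(y)\leq 2$'' alternative to the ``$\deg f(x)\leq 2$'' one by precomposing with $\alpha$, and the impossibility of $\deg f(x)=0$. I see no gap; the only thing to keep an eye on is that your normalizing map $L$ must be the substitution inverse to the coordinate change $(\ell_1,\ell_2)$, which exists exactly because the two factors are linearly independent, as you note.
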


The proof uses Theorem \ref{p symmetric alpha} and its corollary \ref{p symmetric delta}
(Theorem \ref{p symmetric alpha} is based on the $\alpha$ 
restriction theorem \ref{alpha res thm}, 
which is based on Theorem 
\ref{alpha true} ~\cite[Proposition 4.1]{vered valqui}).
There is really nothing more than that,
only some quite easy calculations. 

\begin{proof}
For convenience denote:
$p:= f(x)$ and $q:= f(y)$.

\textbf{First option:}
The degree of $p$ is $1$, so Lemma \ref{degree 1 case} 
shows that $f$ is invertible.

Another argument is as follows; we bring it since it is in the spirit 
of the arguments which we will immediately bring in the second option:
Write $p=ax+by+c$,
for some $a,b,c \in \mathbb{C}$
with at least one of $a,b$ non-zero.

\textbf{I} If $b=0$, 
then $f(x)=p=ax+c$ is symmetric with respect to $\beta$.
By Corollary \ref{p symmetric delta}, $f$ is invertible.

\textbf{II} If $a=0$, 
then $f(x)=p=by+c$ is symmetric with respect to $\gamma$.
By Corollary \ref{p symmetric delta}, $f$ is invertible.

\textbf{III} If both $a$ and $b$ are non-zero,
then we can define the invertible morphism
$g(x)=bx$, $g(y)=ay$.
The morphism $gf$ has an invertible Jacobian 
(since the Jacobian of each of $f$ and $g$
is invertible, and by the Chain Rule) and
$(gf)(x)=g(f(x))=g(p)=g(ax+by+c)=
ag(x)+bg(y)+c=a(bx)+b(ay)+c=ab(x+y)+c$
is symmetric with respect to the exchange involution
$\alpha$.
By Theorem \ref{p symmetric alpha}, $gf$ is invertible.
Then clearly $f$ is invertible 
($f=g^{-1}(gf)$ is a composition of two invertible morphisms: 
$g^{-1}$ and $gf$).

\textbf{Second option:} The degree of $p$ is $2$,
so $p=ax^2+bxy+cy^2+dx+ey+r$,
for some $a,b,c,d,e,r \in \mathbb{C}$
with at least one of $a,b,c$ non-zero.

We consider three main cases: $a=c=0$;  
one of $a,c$ is zero and the other is non-zero;
both $a$ and $c$ are non-zero.
Each of the three main cases is further divided into sub-cases.

In each sub-case we get that $(g_2g_1f)(x)$  
or $(g_1f)(x)$ or $f(x)$ is symmetric with respect to 
some involution $\in C_{-1}$,
where $g_i$ are some invertible morphisms.
Then by Corollary \ref{p symmetric delta},
$\prod{g_i}f$ is invertible, hence $f$ is invertible.

\textbf{Case I} $a=c=0$ (hence $b \neq 0$). 

\textbf{I(1)} If $d=e=0$, then $p=bxy+r$ is already symmetric
with respect to $\alpha$.

\textbf{I(2)} If $e=0$, $d \neq 0$, then $p=bxy+dx+r$.
Since $b \neq 0$, we can define the following morphism
which is obviously invertible:
$g_1(x)=x$, $g_1(y)=y/b-d/b$.
Then 
$(g_1f)(x)=
bx(y/b-d/b)+dx+r=xy-dx+dx+r=xy+r$
is symmetric w.r.t. $\alpha$.

\textbf{I(3)} If $d=0$, $e \neq 0$, 
then $p=bxy+ey+r$.
Define the following invertible morphism:
$g_1(x)=x/b-e/b$, $g_1(y)=y$.
Then 
$(g_1f)(x)=
b(x/b-e/b)y+ey+r=xy-ey+ey+r=xy+r$
is symmetric w.r.t. $\alpha$.

\textbf{I(4)} If $d \neq 0$ and $e \neq 0$, 
then define the following invertible morphism:
$g_1(x)=ex$, $g_1(y)=dy$.
Then 
$(g_1f)(x)=
b(ex)(dy)+d(ex)+e(dy)+r=bdexy+de(x+y)+r$ 
is symmetric w.r.t. $\alpha$. 

\textbf{Case II} One of $a,c$ is zero and the other is non-zero,
without loss of generality assume $c=0$ and $a \neq 0$.

\textbf{II(1)} $b=0$: 
So $p=ax^2+dx+ey+r$.
\begin{itemize}
\item If $d=e=0$, then $p=ax^2+r$ is symmetric w.r.t. 
$\beta$ (or $\gamma$).
\item If $e=0$, then $p=ax^2+dx+r$ is symmetric w.r.t 
$\beta$.
\item If $d=0$, then $p=ax^2+ey+r$ is symmetric w.r.t. 
$\gamma$.
\item If $d \neq 0$ and $e \neq 0$, 
then define the following invertible morphism:
$g_1(x)=ex$, $g_1(y)=dy$.
Then
$(g_1f)(x)=ae^2x^2+de(x+y)+r$.
Take $g_2(x)=x$, $g_2(y)=-x+y$,
and get:
$(g_2g_1f)(x)=ae^2x^2+dey+r$
which is symmetric w.r.t. $\gamma$.
\end{itemize}

\textbf{II(2)} $b \neq 0$:
So $p=ax^2+bxy+dx+ey+r$.
Define the following invertible morphism:
$g_1(x)=x/\sqrt{a}-iy/\sqrt{a}$,
$g_1(y)=2\sqrt{a}iy/b$.
Then
$(g_1f)(x)=x^2+y^2+Dx+Ey+r$,
where
$D=d/\sqrt{a}$, $E=-di/\sqrt{a}+2e\sqrt{a}i/b$.
Take
$g_2(x)=x-D/2$, $g_2(y)=y-E/2$.
Then 
$(g_2g_1f)(x)=x^2+y^2-D^2/4-E^2/4+r$
is symmetric w.r.t. $\alpha$.

\textbf{Case III} Both $a$ and $c$ are non-zero.

\textbf{III(1)} $b=0$:
So $p=ax^2+cy^2+dx+ey+r$.
Since $a \neq 0$ and $c \neq 0$,
we can define:
$g_1(x)=x/\sqrt{a}$, $g_1(y)=y/\sqrt{c}$.
Then 
$(g_1f)(x)=x^2+y^2+dx/\sqrt{a}+ey/\sqrt{c}+r$.

Remark: $a,c \in \mathbb{C}$, so there exist $\sqrt{a},\sqrt{c}$.
This shows that the same proof will not necessarily work over $\mathbb{R}$;
we shall have to at least add the condition that both coefficients  
$a$ and $c$ are positive or both are negative;
if both are negative, we will have to consider $-p$ instead of $p$, 
show that $-f$ is invertible, hence $f$ is invertible.

Now define:
$g_2(x)=x-d/2\sqrt{a}$, $g_2(y)=y-e/2\sqrt{c}$.
Then
$(g_2g_1f)(x)=x^2+y^2-d^2/4a-e^2/4c+r$
is symmetric w.r.t. $\alpha$.

\textbf{III(2)} $b \neq 0$:
Again we can define:
$g_1(x)=x/\sqrt{a}$, $g_1(y)=y/\sqrt{c}$.
Then 
$(g_1f)(x)=x^2+Bxy+y^2+Dx+Ey+r$,
where 
$B=b/\sqrt{ac}$, $D=d/\sqrt{a}$, $E=e/\sqrt{c}$.
Now,
$x^2+Bxy+y^2+Dx+Ey+r=
(x+By/2)^2+ty^2+Dx+Ey+r$, 
where
$t=1-B^2/4$.
\begin{itemize}
\item If $t \neq 0$, then we can define
$g_2(x)=x-By/2\sqrt{t}$, $g_2(y)=y/\sqrt{t}$.
So
$(g_2g_1f)(x)=x^2+y^2+\tilde{D}x+\tilde{E}y+r$,
where
$\tilde{D}=D$ and 
$\tilde{E}=-DB/2\sqrt{t}+E/\sqrt{t}$,
which is of the form of \textbf{III(1)},
so $g_2g_1f$ is invertible.
\item If $t=0$, then $B=2$ or $B=-2$:
If $B=2$, then
$(g_1f)(x)=(x+y)^2+Dx+Ey+r$.
Take 
$g_2(x)=x+y$, $g_2(y)=-y$.
Then
$(g_2g_1f)(x)=x^2+Dx+(D-E)y+r$,
which is of the form of \textbf{II(1)},
so $g_2g_1f$ is invertible.
If $B=-2$, then
$(g_1f)(x)=(x-y)^2+Dx+Ey+r$.
Take 
$g_2(x)=x+y$, $g_2(y)=y$.
Then
$(g_2g_1f)(x)=x^2+Dx+(D+E)y+r$,
which is of the form of \textbf{II(1)},
so $g_2g_1f$ is invertible.
\end{itemize}
\end{proof}
\subsection{Second observation}
 
\begin{theorem}[Our second observation]\label{2}
Let $K$ denote $\mathbb{C}$ or $\mathbb{R}$.
Let $f: K[x,y] \to K[x,y]$
be a morphism that satisfies
$\Jac(f(x),f(y)) \in K^*$.
If one of the following two conditions is satisfied, 
then $f$ is invertible:
\begin{itemize}
\item [(1)] The $(0,1)$-degrees of all monomials 
in $f(x)$ are of the same parity.
\item [(2)] The $(1,0)$-degrees of all monomials 
in $f(x)$ are of the same parity.
\end{itemize}
\end{theorem}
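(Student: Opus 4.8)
The plan is to reduce both parts of the theorem to Corollary \ref{p symmetric delta} by reinterpreting the parity hypothesis on the monomials of $p := f(x)$ as a symmetry or skew-symmetry of $p$ with respect to one of the diagonal involutions $\beta$ or $\gamma$ from the Preliminaries.

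First I would record the elementary translation. Write $p = \sum_{i,j} c_{ij} x^i y^j \in K[x,y]$. Applying $\beta$ (where $\beta(x)=x$, $\beta(y)=-y$) monomial by monomial, one sees that $\beta(p)=p$ holds if and only if $c_{ij}=0$ whenever $j$ is odd, i.e.\ every monomial occurring in $p$ has even $(0,1)$-degree; and $\beta(p)=-p$ holds if and only if $c_{ij}=0$ whenever $j$ is even, i.e.\ every monomial occurring in $p$ has odd $(0,1)$-degree. Symmetrically, with $\gamma(x)=-x$, $\gamma(y)=y$: the identity $\gamma(p)=p$ says every monomial of $p$ has even $(1,0)$-degree, while $\gamma(p)=-p$ says every monomial of $p$ has odd $(1,0)$-degree.

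Next I would handle condition $(1)$. By hypothesis the $(0,1)$-degrees of all monomials of $p$ have a common parity. If that parity is even, the translation above gives $\beta(p)=p$, so $f(x)=p$ is symmetric with respect to $\beta$; if it is odd, then $\beta(p)=-p$, so $p$ is skew-symmetric with respect to $\beta$. Since $\beta$ has Jacobian $-1$, Proposition \ref{two conj classes} (together with the observation in the Preliminaries that $\alpha$, $\beta$, $\gamma$ all lie in one conjugacy class) gives $\beta \in C_{-1}$. Hence Corollary \ref{p symmetric delta} applied with $\delta = \beta$ — part $(1)$ in the even case and part $(2)$ in the odd case — shows that $f$ is invertible. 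Condition $(2)$ is treated verbatim with $\gamma$ in place of $\beta$, using that $\gamma$ also has Jacobian $-1$ and hence $\gamma \in C_{-1}$.

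There is no real obstacle here: all the substance is carried by Corollary \ref{p symmetric delta} (and, behind it, Theorem \ref{p symmetric alpha}, the $\alpha$ restriction theorem \ref{alpha res thm}, and the truth of the $\alpha$ Jacobian Conjecture, Theorem \ref{alpha true}). The only small points to be careful about are that the hypothesis concerns precisely the monomials actually appearing in $p$, so in the ``all odd'' cases $p$ has no constant term and the equalities $\beta(p)=-p$ and $\gamma(p)=-p$ hold exactly; and that the theorem imposes no condition on $f(y)$, which is harmless since Corollary \ref{p symmetric delta} constrains only one of $f(x),f(y)$. The restriction to $K=\mathbb{C}$ or $\mathbb{R}$ is inherited from Corollary \ref{p symmetric delta} and is used nowhere else.
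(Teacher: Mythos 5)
Your proposal is correct and follows exactly the paper's own argument: translate the parity hypothesis into symmetry or skew-symmetry of $f(x)$ with respect to $\beta$ (for condition (1)) or $\gamma$ (for condition (2)), note that these involutions lie in $C_{-1}$, and invoke Corollary \ref{p symmetric delta}. Your version merely spells out the monomial-by-monomial verification and the membership $\beta,\gamma\in C_{-1}$ more explicitly than the paper does.
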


\begin{proof}
If all monomials in $f(x)$ are of odd $(0,1)$-degrees
then $f(x)$ is skew-symmetric w.r.t. $\beta$.
If all monomials in $f(x)$ are of even $(0,1)$-degrees
then $f(x)$ is symmetric w.r.t. $\beta$.
If all monomials in $f(x)$ are of odd $(1,0)$-degrees
then $f(x)$ is skew-symmetric w.r.t. $\gamma$.
If all monomials in $f(x)$ are of even $(1,0)$-degrees
then $f(x)$ is symmetric w.r.t. $\gamma$.
In each case, Corollary \ref{p symmetric delta} shows that $f$ is invertible
\end{proof}
\section{Generalizations}
We suggest the following ideas how to generalize Theorems \ref{1} and \ref{2}.
We have not yet seriously checked each idea,
so maybe some of them will not work;
we hope that it will turn out that some of them will work.

\subsection{First idea} 
The following theorem is true, as a special case of theorems
$(1),(2),(4)$ of Remark \ref{other nice conditions}.
However, we wish to prove it in a way similar to the way we proved 
Theorem \ref{1}, namely, using Corollary \ref{p symmetric delta}.

\begin{theorem}\label{first idea}
Let $f:\mathbb{C}[x,y] \to \mathbb{C}[x,y]$
be a morphism 
that satisfies

$\Jac(f(x),f(y)) \in \mathbb{C}^*$.
If the degree of $f(x)$ or the degree of $f(y)$ is $\leq 3$,
then $f$ is invertible.
\end{theorem}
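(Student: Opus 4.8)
The plan is to imitate the proof of Theorem \ref{1}, reducing the degree-$\le 3$ case to a situation where $(gf)(x)$, for a suitable composition $g$ of invertible affine and de Jonqui\`eres morphisms, is symmetric or skew-symmetric with respect to one of the involutions $\alpha,\beta,\gamma$, so that Corollary \ref{p symmetric delta} applies. As before, the degree $1$ and degree $2$ cases are already covered by Lemma \ref{degree 1 case} and (the argument of) Theorem \ref{1}, so the real content is the case $\deg f(x)=3$. First I would write $f(x)=F_3+F_2+F_1+F_0$ with $F_i$ the $(1,1)$-homogeneous component of degree $i$, and focus on normalizing the cubic part $F_3=ax^3+bx^2y+cxy^2+dy^3$ by an invertible linear change of coordinates $g_1\in G_1$. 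Over $\mathbb{C}$ a nonzero binary cubic form factors into three linear forms, so up to $\GL_2(\mathbb{C})$ there are only a few normal forms for $F_3$ (three distinct roots: $xy(x+y)$ or $x^3+y^3$ after a further scaling; a double root: $x^2y$; a triple root: $x^3$). I would treat each normal form as a separate case.

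After fixing the normal form of the cubic part, I would use de Jonqui\`eres morphisms $g(x)=x, g(y)=\lambda y+h(x)$ (and their mirror images under $\alpha$) together with affine translations to absorb the lower-degree terms $F_2,F_1,F_0$, exactly as the substitutions $g_1,g_2$ were used in Cases I--III of Theorem \ref{1}. The point in each branch is to arrange that the resulting polynomial $(g_k\cdots g_1 f)(x)$ has all its monomials of one fixed parity in the $x$-exponent (then it is $\pm$-symmetric w.r.t. $\gamma$), or in the $y$-exponent (then w.r.t. $\beta$), or is literally invariant/anti-invariant under the swap $\alpha$. For instance, if the cubic part normalizes to $x^3$, translating in $x$ kills the $x^2$ term and one is left with $x^3+(\text{terms of }x\text{-degree}\le 1)$, and a de Jonqui\`eres move in $y$ can be used to clear the remaining obstructions to $\gamma$-symmetry; if it normalizes to $x^2y$ or $xy(x+y)$ one plays the analogous game symmetrically. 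When $\deg f(x)\le 3$ fails but $\deg f(y)\le 3$, apply the same analysis to $f(y)$ using conditions $(3),(4)$ of Theorem \ref{p symmetric alpha}.

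The main obstacle I expect is that, unlike the quadratic case, a cubic has a one-dimensional space of ``leftover'' lower-order coefficients that cannot all be removed by affine plus de Jonqui\`eres moves while keeping the cubic part in normal form; in some branches (e.g.\ the three-distinct-roots case) one may be forced into a polynomial such as $xy(x+y)+(\text{lower order})$ whose lower-order part genuinely breaks every one of the symmetries $\alpha,\beta,\gamma$. In those branches the trick of Theorem \ref{1} — peeling off the highest component via $fg$ and invoking Lemma \ref{degree 1 case} — is not available, so one would have to either (i) enlarge the toolkit to other involutions in $C_{-1}$ (Corollary \ref{p symmetric delta} allows any $\delta\in C_{-1}$, and by Proposition \ref{two conj classes} all such $\delta$ are conjugate to $\alpha$, giving more flexibility than just $\alpha,\beta,\gamma$), or (ii) fall back, only for those residual branches, on the already-known results $(1),(2),(4)$ of Remark \ref{other nice conditions}. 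Since the theorem is stated as ``true as a special case'' of those results, this fallback is legitimate, but a fully self-contained symmetry proof will require checking that in every cubic normal form some $\delta\in C_{-1}$ can be reached — and that verification is where all the work lies.
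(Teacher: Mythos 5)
Your proposal follows exactly the strategy of the paper, but you should know that the paper's own ``proof'' of Theorem \ref{first idea} is explicitly labelled a \emph{sketch}: it disposes of $\deg f(x)\le 2$ via Theorem \ref{1}, writes out the general cubic $f(x)=ax^3+bx^2y+cxy^2+dy^3+\cdots$, and then literally says ``We \emph{guess} it is possible to find invertible morphisms $g_1,\ldots,g_l$ such that $(\prod g_i f)(x)$ is symmetric or skew-symmetric w.r.t.\ some involution $\in C_{-1}$,'' followed by the appeal to Corollary \ref{p symmetric delta}. So there is no completed case analysis in the paper for you to match; your plan (normalize the binary cubic under $\GL_2(\mathbb{C})$ to $xy(x+y)$, $x^2y$ or $x^3$, then try to absorb $F_2,F_1,F_0$ by affine and de Jonqui\`eres moves) is in fact \emph{more} concrete than what the paper provides, and the obstacle you flag --- leftover lower-order coefficients that break $\alpha$-, $\beta$- and $\gamma$-symmetry simultaneously in some branches --- is precisely the gap the paper leaves open. (Note for instance that when $F_3=x^3$, translating in $x$ removes only the $x^2$ monomial of $F_2$, not $Bxy$ or $Cy^2$, and a de Jonqui\`eres substitution $y\mapsto y+h(x)$ applied on the left reintroduces mixed terms; so the residual branches are genuinely nontrivial, and the paper records a worked instance only for the special shape $x+y^3$ in Example \ref{example first idea}.)

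The one thing worth making explicit, which both your write-up and the paper's sketch leave implicit, is that the theorem itself is not in doubt: the paper states just before Theorem \ref{first idea} that it is a special case of the results in Remark \ref{other nice conditions}, and indeed degree $3$ is prime, so Magnus's theorem (item (2) of that remark) already gives invertibility, while degree $\le 2$ is Theorem \ref{1}. Your fallback (ii) is therefore legitimate and yields a complete (if unenlightening) proof of the statement; the symmetry-based proof via Corollary \ref{p symmetric delta} remains, in both your proposal and the paper, an unverified programme rather than a proof. If you want to carry the programme out, the place to concentrate is exactly the verification you name in your last sentence: that for every $\GL_2(\mathbb{C})$-normal form of the cubic part, \emph{some} $\delta\in C_{-1}$ (not necessarily one of $\alpha,\beta,\gamma$) can be reached after composing with invertible morphisms.
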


\begin{proof}[Sketch of proof]
If the degree of $f(x)$ is $\leq 2$, 
then $f$ is invertible by Theorem \ref{1}.
If the degree of $f(x)$ is $3$, then $f(x)$ is of the following form:
$f(x)=ax^3+bx^2y+cxy^2+dy^3+Ax^2+Bxy+Cy^2+Dx+Ey+R$,
where $a,b,c,d,A,B,C,D,E,R \in \mathbb{C}$
with at least one of $a,b,c,d$ non-zero.
We guess it is possible to find invertible morphisms
$g_1,\ldots,g_l$, such that $(\prod{g_i}f)(x)$ is 
symmetric or skew-symmetric w.r.t. some involution $\in C_{-1}$. 
If so, then by Corollary \ref{p symmetric delta}, 
$\prod{g_i}f$ is invertible, 
hence so is $f$.
\end{proof}

Actually, what is important in the proof of Theorem \ref{1}
and in the sketch of proof of Theorem \ref{first idea}
is not the degree of $f(x)$,
but the existence of invertible morphisms $g_1,\ldots,g_l$
such that $(\prod{g_i}f)(x)$ becomes 
symmetric or skew-symmetric w.r.t. some involution $\in C_{-1}$ 
(When the degree of $f(x)$ is $\leq 2$, 
it is not difficult to find such $g_i$'s,
as we have seen in the proof of Theorem \ref{1}.
When the degree of $f(x)$ is $3$, 
the situation becomes more complicated, 
and we have not yet found such $g_i$'s,
only in some special cases, 
one of them is brought in Example \ref{example}).
Therefore we make the following conjecture:

\begin{conjecture}\label{p symmetric}
Let $K$ denote $\mathbb{C}$ or $\mathbb{R}$.
Assume $A \in K[x,y]$ has a Jacobian mate.
Then there exist invertible morphisms $g_1,\ldots,g_l$
such that $\prod{g_i}A$ is 
symmetric or skew-symmetric w.r.t. some involution on $K[x,y]$
that has Jacobian $-1$.
\end{conjecture}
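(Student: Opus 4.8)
The plan is to imitate the proof of Theorem \ref{1}: starting from $A$, to build invertible morphisms $g_1,\dots,g_l$ one at a time, each of affine or de Jonquieres type, so that the non-symmetric part of $\prod{g_i}A$ is reduced at every stage. A harmless first reduction shows that it suffices to reach symmetry or skew-symmetry with respect to $\alpha$ itself (equivalently $\beta$ or $\gamma$): if $\delta=h^{-1}\alpha h\in C_{-1}$ and $\prod{g_i}A$ is fixed or negated by $\delta$, then $h\prod{g_i}A$ is fixed or negated by $\alpha$ and $h\prod{g_i}$ is again a product of invertible morphisms. So the target becomes: find an automorphism $g$ of $K[x,y]$ with $g(A)(x,y)=\pm g(A)(y,x)$, or with $g(A)$ even or odd in one of the two variables.

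The structural input would be the restrictions that the existence of a Jacobian mate imposes on the Newton polygon of $A$, in the spirit of Abhyankar--Moh and of \cite[Ch.~10]{essen}: the top $(1,1)$-homogeneous component of $A$ must be a scalar times a power of a single linear form, and many naive leading forms are excluded outright because they admit no Jacobian mate at all (for instance $\Jac(xy,B)=y\,\partial_y B-x\,\partial_x B$ never equals a nonzero constant). Thus, after a linear automorphism, one may assume $A=cx^n+(\text{terms of smaller }(1,1)\text{-degree})$ and induct on $n=\deg A$. The cases $n\le 2$ are exactly the computations inside the proof of Theorem \ref{1}, and $n=3$ is Theorem \ref{first idea}, which is settled so far only in special subcases (see Example \ref{example}). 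For the inductive step one would combine the finer ``staircase'' normal forms available for Jacobian pairs with completing-the-square substitutions of the type used in case III(2) of the proof of Theorem \ref{1} (the split $t\neq 0$ versus $t=0$), choosing them so as to decrease the degree or the ``defect'' of the non-symmetric part of $A$ while staying inside the automorphism group.

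\textbf{The main obstacle} is that this statement cannot be decoupled from the Jacobian Conjecture in dimension two; over $\mathbb{C}$ it is in fact equivalent to it, and the case $K=\mathbb{R}$ follows once the complex case is known. Indeed, if the Jacobian Conjecture holds and $A$ has a Jacobian mate $B$, then $(A,B)$ is an automorphism, so $A$ is a coordinate, say $A=h(x)$, and $h^{-1}(A)=x$ is already symmetric with respect to $\beta\in C_{-1}$; this proves the statement with $l=1$. Conversely, the statement implies the Jacobian Conjecture: given $f$ with $\Jac(f(x),f(y))\in K^*$, apply it to $A=f(x)$ to obtain $g=\prod{g_i}$ with $(gf)(x)=g(A)$ symmetric or skew-symmetric with respect to some involution in $C_{-1}$, whence $gf$, and therefore $f$, is invertible by Corollary \ref{p symmetric delta}. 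So an unconditional proof would settle the two-dimensional Jacobian Conjecture, and the realistic aim of this programme is the incremental one: to push the explicit reductions of Theorem \ref{1} (degree $\le 2$) and the partial arguments behind Theorem \ref{first idea} (degree $3$) to more Newton-polygon types and to higher degree, where the combinatorics of the required substitutions is the part that grows fastest.
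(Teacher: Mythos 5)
You have not produced a proof, and neither does the paper: the statement you were given is stated there as Conjecture \ref{p symmetric}, left open, with the author writing only that she ``suspects'' it is true. So there is no proof in the paper to compare yours against, and your proposal is, by your own admission, a programme rather than an argument. The genuine gap is therefore total on both sides: your first two paragraphs describe a reduction strategy (normalize to $\alpha$, $\beta$ or $\gamma$; constrain the leading form via the Newton polygon; induct on degree using affine and de Jonquieres substitutions), but no inductive step is actually carried out beyond the degree $\leq 2$ computations already contained in the proof of Theorem \ref{1}, and you concede that degree $3$ is only partially handled. That is exactly the state of affairs in the paper.

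What you add that the paper does not state in full is the \emph{equivalence} with the two-dimensional Jacobian Conjecture, and this part of your writeup is correct and worth keeping. The direction ``Conjecture \ref{p symmetric} implies JC'' is precisely the paper's Theorem \ref{if conj then JC}, proved there via Corollary \ref{p symmetric delta}. The converse direction you supply — if JC holds and $A$ has a Jacobian mate $B$, then $(A,B)$ is an automorphism, so some automorphism carries $A$ to $x$, which is symmetric with respect to $\beta \in C_{-1}$, giving the conclusion with $l=1$ — is a sound one-line argument the paper omits. This equivalence is also exactly why your strategy cannot close: any unconditional execution of your induction would prove JC in dimension two, so the honest conclusion is that the conjecture remains open and your proposal should be presented as motivation and partial evidence (matching Theorem \ref{1}, Theorem \ref{first idea} and Example \ref{example first idea}), not as a proof.
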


\begin{example}\label{example first idea}
Let $A=x+y^3$.
Clearly, $A$ is skew-symmetric w.r.t. $\epsilon$,
but the Jacobian of $\epsilon$ equals $1$, so $\epsilon$
is not relevant for Conjecture \ref{p symmetric}.
However, we can take $g_1(x):=x-y^3, g_1(y):=y$,
and get
$g_1A=g_1(x+y^3)=g_1(x)+g_1(y^3)=x-y^3+y^3=x$,
which is symmetric w.r.t. $\beta$
(and is skew-symmetric w.r.t. $\gamma$).
\end{example}

We suspect that Conjecture \ref{p symmetric} is true,
or at least the following weaker version of it is true, 
in which we allow the involution to have either Jacobian $-1$ or $1$:

\begin{conjecture}\label{p symmetric weak}
Let $K$ denote $\mathbb{C}$ or $\mathbb{R}$.
Assume $A \in \mathbb{C}[x,y]$ has a Jacobian mate.
Then there exist invertible morphisms $g_1,\ldots,g_l$
such that $\prod{g_i}A$ is 
symmetric or skew-symmetric w.r.t. some involution on $K[x,y]$. 
\end{conjecture}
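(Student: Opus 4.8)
The plan is to split on whether $A$ is a coordinate of $K[x,y]$. If it is, write $A=h(x)$ for an automorphism $h$ of $K[x,y]$; then $g:=h^{-1}$ is an automorphism with $g(A)=x$, and $x$ is symmetric with respect to $\beta$. So in this (easy) case the conjecture holds with $l=1$, $g_1=g$ (and $\prod g_i A=g(A)=x$). The whole content therefore lies in polynomials $A$ that are \emph{not} coordinates yet possess a Jacobian mate $B$; rescaling $B$ we may assume $\Jac(A,B)=1$, and throughout I write $g:=g_l\circ\cdots\circ g_1$ for the composite automorphism being sought.

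First I would record a structural observation, in the spirit of the paper's own use of C-M-W's theorem and Lemma \ref{cmw}: the statement ``every $A\in K[x,y]$ with a Jacobian mate is a coordinate'' is equivalent to the two dimensional Jacobian Conjecture. Indeed, given a Jacobian pair $(p,q)$, the polynomial $p$ has $q$ as a Jacobian mate, so if $p$ had to be a coordinate we could pick an automorphism $(p,q')$, choose a constant $c$ with $\Jac(p,q-cq')=0$, conclude $q-cq'\in K[p]$ by C-M-W's theorem, and then recognise $(p,q)=(p,cq'+H(p))$ as the composite of the automorphism $(p,q')$ with a de Jonquieres move, hence an automorphism. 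Consequently Conjecture \ref{p symmetric weak} is a consequence of the two dimensional Jacobian Conjecture, and the real point is to prove it unconditionally. The leverage available for \emph{this} conjecture, as opposed to Conjecture \ref{p symmetric}, is that involutions of Jacobian $+1$ are now allowed, so $g(A)$ need not be forced to become a coordinate --- it need only acquire a parity pattern in its support.

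For the unconditional attack I would mimic the proof of Theorem \ref{1} and the sketch of proof of Theorem \ref{first idea}, replacing the finite case analysis by structure theory. Running the Newton-polygon reduction for Jacobian pairs from \cite[Chapter 10]{essen}, one replaces $(A,B)$ by $(g(A),g(B))$ for automorphisms $g$ (compositions of affine and de Jonquieres maps) until the total degree can drop no further; relabel so that $A$ is already in such reduced form. Then \cite[Lemma 10.2.4 (i)]{essen} applies to the top $(1,1)$-homogeneous component, giving a homogeneous $R$ with $A_{\deg A}=\lambda R^{a}$. Over $K=\mathbb C$ (and over $\mathbb R$ after the dilations that are available) one normalises $R$ and then clears denominators with a diagonal automorphism $x\mapsto sx$, $y\mapsto ty$; the aim is that this arranges all monomials of $g(A)$ to share the parity of a suitable weighted degree, so that $g(A)$ becomes symmetric or skew-symmetric with respect to $\epsilon$, $\beta$ or $\gamma$, or (in the balanced symmetric case) with respect to $\alpha$ --- all of which are involutions of the kind demanded, by Proposition \ref{two conj classes}. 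For $\deg A\le 3$ this is exactly the hands-on computation carried out in the proof of Theorem \ref{1} and sketched for Theorem \ref{first idea}.

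The hard part will be controlling the monomials of $g(A)$ below the leading form: a reduced Jacobian pair has a well-understood leading term (a power of a single factor), but its lower-order coefficients are precisely what obstructs the two dimensional Jacobian Conjecture, so some genuinely new input is needed to rule out a ``parity-breaking'' monomial surviving the reduction. Here I would try to use that the generic fibre of a polynomial with a Jacobian mate has one place at infinity (Moh \cite{moh}), which forces the Newton polygon of the reduced $A$ to be a single segment of rational slope $a/b$ in lowest terms and $A$ to lie close to a power of its $(b,a)$-approximate root; a dilation clearing $b$ should then make the $(b,a)$-weighted degrees of all surviving monomials congruent modulo $2$, which is exactly the parity statement needed. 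Establishing that this always happens --- or producing a counterexample --- is where I expect the argument to stall with current techniques, and it is why the statement is left as a conjecture; allowing Jacobian $+1$ involutions (Conjecture \ref{p symmetric weak} rather than Conjecture \ref{p symmetric}) is the natural first target precisely because it removes the requirement that $g(A)$ be a coordinate.
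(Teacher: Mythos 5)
The statement you are asked about is posed in the paper as a \emph{conjecture}: the paper offers no proof of it (it only records that a positive answer to Conjecture \ref{p symmetric} would imply the two dimensional Jacobian Conjecture, and that the weak version \ref{p symmetric weak} is not even known to have such a consequence because the $\epsilon$ Jacobian Conjecture \ref{epsilon conj} is open). Your submission is therefore correctly diagnosed by its own last sentence: it is a research plan, not a proof. The parts of it that are actually established are the easy observations --- that the conjecture holds when $A$ is a coordinate (compose with the inverse automorphism and note that $x$ is $\beta$-symmetric), and that it would follow from the two dimensional Jacobian Conjecture via C-M-W's theorem. Both are correct but carry no unconditional content, since deciding whether every $A$ with a Jacobian mate is a coordinate \emph{is} the Jacobian Conjecture.

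The genuine gap is exactly where you place it, and it is worth naming precisely why the proposed machinery cannot close it. The Newton-polygon reduction and \cite[Lemma 10.2.4 (i)]{essen} control only the top $(1,1)$-homogeneous component of a reduced Jacobian pair (it is $\lambda R^{a}$ for a single homogeneous $R$), and Moh's one-place-at-infinity theorem \cite{moh} likewise constrains only the behaviour at infinity. Neither says anything about which monomials occur strictly below the leading form, and a single parity-breaking monomial there defeats symmetry or skew-symmetry with respect to every candidate involution ($\alpha$, $\beta$, $\gamma$, $\epsilon$, or any conjugate --- by Proposition \ref{two conj classes} these exhaust the possibilities up to absorbing the conjugating automorphism into $\prod g_i$). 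A diagonal substitution $x\mapsto sx$, $y\mapsto ty$ rescales coefficients but never changes the support of $g(A)$, so it cannot create the required congruence of weighted degrees modulo $2$ if the support does not already have it; and the non-diagonal automorphisms that could change the support are precisely the reduction steps whose effect on the sub-leading part is the unsolved heart of the problem. So the proposal does not prove the statement, and no proof exists in the paper to compare it against; the correct assessment is that both leave the same obstruction open.
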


Of course, what is nice in Conjecture \ref{p symmetric} 
is that a positive answer to it implies a positive answer to the 
two dimensional Jacobian Conjecture:

\begin{theorem}\label{if conj then JC}
If Conjecture \ref{p symmetric} is true,
then the two dimensional Jacobian Conjecture 
(over $\mathbb{C}$ or $\mathbb{R}$) is true.
\end{theorem}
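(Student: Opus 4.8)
The plan is to reduce the Jacobian Conjecture for such an $f$ directly to Corollary \ref{p symmetric delta} by feeding Conjecture \ref{p symmetric} the polynomial $f(x)$. First I would take a morphism $f: K[x,y] \to K[x,y]$ with $\Jac(f(x),f(y)) \in K^*$ and set $A := f(x)$. Since $\Jac(f(x),f(y)) \in K^*$, the element $f(y)$ is a Jacobian mate of $A$, so $A$ has a Jacobian mate and Conjecture \ref{p symmetric} applies: there exist invertible morphisms $g_1,\ldots,g_l$ of $K[x,y]$ such that, writing $g := g_1 g_2 \cdots g_l$ for their composition, $g(A)$ is symmetric or skew-symmetric with respect to some involution $\delta$ on $K[x,y]$ having Jacobian $-1$; by Proposition \ref{two conj classes} this means precisely $\delta \in C_{-1}$.

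Next I would consider the morphism $gf$. By the Chain Rule, and because each $g_i$ and $f$ has an invertible Jacobian, $\Jac((gf)(x),(gf)(y)) \in K^*$. Moreover $(gf)(x) = g(f(x)) = g(A)$, which by the previous paragraph is symmetric or skew-symmetric with respect to $\delta \in C_{-1}$. Hence condition $(1)$ or condition $(2)$ of Corollary \ref{p symmetric delta} is satisfied for the morphism $gf$ (with the involution $\delta$), so Corollary \ref{p symmetric delta} yields that $gf$ is invertible. Since $g$ is a composition of invertible morphisms, it is invertible, and therefore $f = g^{-1}(gf)$ is a composition of two invertible morphisms, hence invertible. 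As $f$ was an arbitrary morphism of $K[x,y]$ with invertible Jacobian, the two dimensional Jacobian Conjecture over $K$ follows.

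I do not expect a genuine obstacle here: the argument is just a packaging of Conjecture \ref{p symmetric} together with Corollary \ref{p symmetric delta}, and it is essentially the same scheme already used in the proof of Theorem \ref{1}. The only points deserving a word of care are that Conjecture \ref{p symmetric} is deliberately formulated so as to deliver an involution of Jacobian \emph{exactly} $-1$ --- which is exactly the hypothesis $\delta \in C_{-1}$ required by Corollary \ref{p symmetric delta}, so the matching is not an accident --- and that Corollary \ref{p symmetric delta} is presently available only over $\mathbb{C}$ or $\mathbb{R}$, which is why the conclusion is stated over $\mathbb{C}$ or $\mathbb{R}$. If the C-M-W input (Lemma \ref{lemma char 0}, via Lemma \ref{lemma alg clo char 0}) were established over every field of characteristic zero, then Theorem \ref{p symmetric alpha} and Corollary \ref{p symmetric delta} would hold over every such field, and the very same proof would give the two dimensional Jacobian Conjecture over every field of characteristic zero.
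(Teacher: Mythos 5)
Your proposal is correct and follows essentially the same route as the paper: apply Conjecture \ref{p symmetric} to $A=f(x)$ (whose Jacobian mate is $f(y)$), observe that $(\prod g_i)f$ has invertible Jacobian and its image of $x$ is (skew-)symmetric with respect to an involution in $C_{-1}$, invoke Corollary \ref{p symmetric delta}, and peel off the invertible $g_i$'s. Your extra remarks on why the Jacobian $-1$ condition matches $C_{-1}$ and on the restriction to $\mathbb{C}$ or $\mathbb{R}$ are consistent with the paper's discussion and add no new obstacles.
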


\begin{proof}
Let $K$ denote $\mathbb{C}$ or $\mathbb{R}$.
Let $f: K[x,y] \to K[x,y]$
be a morphism that satisfies
$\Jac(f(x),f(y)) \in K^*$.
$f(x)$ has a Jacobian mate, so Conjecture \ref{p symmetric}
implies that there exist invertible morphisms $g_1,\ldots,g_l$
such that $\prod{g_i}f(x)$ is 
symmetric or skew-symmetric w.r.t. some involution on $K[x,y]$
that has Jacobian $-1$.
Apply Corollary \ref{p symmetric delta} to
$\prod{g_i}f$ and obtain that $\prod{g_i}f$ is invertible.
Then $f$ is invertible.
\end{proof}

If Conjecture \ref{p symmetric} is not true, but
its weak version Conjecture \ref{p symmetric weak} is true, 
we do not have an analogous result to Theorem \ref{if conj then JC} yet
(hopefully, we will have an analogous result to Theorem \ref{if conj then JC});
this is because we have not yet proved
the $\epsilon$ Jacobian Conjecture \ref{epsilon conj}.

Indeed, if the $\epsilon$ Jacobian Conjecture is true, 
then it is not difficult to obtain analogous results to
the $\alpha$ restriction theorem \ref{alpha res thm},
Theorem \ref{p symmetric alpha} and its corollary \ref{p symmetric delta},
with $\alpha$ replaced by $\epsilon$ and $\delta$ of 
Corollary \ref{p symmetric delta}, which is any involution that is conjugate to
$\alpha$, replaced by any involution that is conjugate to $\epsilon$.

\subsection{Second idea}
If the following ``proof" has no errors, 
then the following theorem is a generalization of Wang's theorem
~\cite[Theorem 62]{wang sep} 
(when $D=\mathbb{C}$ in Wang's theorem). 

\begin{theorem}\label{second idea}
Let
$f: \mathbb{C}[x_1,\ldots,x_n] \to \mathbb{C}[x_1,\ldots,x_n]$ 
be a morphism having an invertible Jacobian ($n \geq 2$).
If there exists $1 \leq i_0 \leq n$ such that the degree of 
$f(x_{i_0})$ is $\leq 2$,
then $f$ is invertible.
\end{theorem}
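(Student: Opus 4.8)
The plan is to reduce the $n$-dimensional statement to the $2$-dimensional Theorem \ref{1} by a slicing/specialization argument combined with the separability input of Wang. First I would record the structural facts that come for free. Since $\Jac(f(x_1),\dots,f(x_n)) \in \mathbb{C}^*$, Wang's \cite[Theorem 38]{wang sep} tells us that $\mathbb{C}[f(x_1),\dots,f(x_n)] \subseteq \mathbb{C}[x_1,\dots,x_n]$ is separable; moreover $f(x_1),\dots,f(x_n)$ are algebraically independent, so $\Img(f)$ is itself a polynomial ring in $n$ variables. The morphism $f$ is invertible if and only if $\Img(f) = \mathbb{C}[x_1,\dots,x_n]$, and it suffices to prove the latter.

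After renumbering we may assume $i_0 = n$, so $\deg f(x_n) \le 2$. I would like to ``freeze'' the variables $x_1,\dots,x_{n-2}$ by specializing them to generic scalars $c_1,\dots,c_{n-2} \in \mathbb{C}$, giving a morphism $\bar f$ on $\mathbb{C}[x_{n-1},x_n]$ obtained by sending $x_j \mapsto c_j$ in each $f(x_i)$. Two things need to be arranged: (i) for generic $c$, the two-variable Jacobian $\Jac\big(\overline{f(x_{n-1})},\overline{f(x_n)}\big)$ with respect to $x_{n-1},x_n$ is still a nonzero constant — this follows because the relevant $2\times 2$ minor of the full Jacobian, evaluated on the generic slice, is obtained by a polynomial specialization that does not vanish identically (the full Jacobian being a nonzero constant forces some such minor, after possibly a $\GL_n(\mathbb{C})$ change of the target coordinates among $f(x_1),\dots,f(x_n)$, to survive); and (ii) $\deg \overline{f(x_n)} \le 2$ is automatic. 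Then Theorem \ref{1} applies to the slice and shows $\mathbb{C}[\overline{f(x_{n-1})},\overline{f(x_n)}] = \mathbb{C}[x_{n-1},x_n]$, i.e.\ $x_{n-1}$ and $x_n$ lie in the subalgebra generated by the slice of $f$.

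From here I would propagate the information back up. One route: having shown that for generic $c$ the pair $\overline{f(x_{n-1})},\overline{f(x_n)}$ generates $\mathbb{C}[x_{n-1},x_n]$, one deduces that $x_{n-1}, x_n \in \Img(f)$ by an argument that the coefficients expressing $x_{n-1},x_n$ as polynomials in $\overline{f(x_{n-1})},\overline{f(x_n)}$ can be chosen independent of the generic parameters (spreading out), hence lift to polynomials in $x_1,\dots,x_{n-2}$; then $\Img(f) \supseteq \mathbb{C}[x_{n-1},x_n,f(x_1),\dots,f(x_{n-2})]$, and an induction on $n$ — applied to the induced endomorphism of $\mathbb{C}[x_1,\dots,x_{n-2}]$ that one extracts after quotienting out $x_{n-1},x_n$, which again has invertible Jacobian and still has a generator of degree $\le 2$ in the right place — finishes the proof. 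The base case $n=2$ is exactly Theorem \ref{1}.

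The main obstacle, and the place where this ``proof'' is genuinely fragile, is step (i)–(ii) of the specialization together with the lifting in the last paragraph: it is not obvious that a single $2\times 2$ minor survives specialization, nor that the expression of $x_{n-1},x_n$ in terms of the sliced generators is uniform in the parameters, and the inductive reduction of the endomorphism on $\mathbb{C}[x_1,\dots,x_{n-2}]$ needs the degree-$\le 2$ generator to remain available after the reduction — which may fail if $f(x_{i_0})$ genuinely involves $x_{n-1}$ or $x_n$. This is precisely why the theorem is stated under the heading ``Second idea'' with the caveat ``if the following proof has no errors''; a careful treatment would likely need to replace the crude specialization by a more robust argument, for instance using Wang's separability of $\Img(f) \subseteq \mathbb{C}[x_1,\dots,x_n]$ directly to control the relevant minors.
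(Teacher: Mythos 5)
Your reduction has a genuine gap at its very first step, and it is not merely technical. The claim that, after a $\GL_n(\mathbb{C})$ recombination of the targets, some $2\times 2$ minor of the Jacobian matrix restricted to a generic slice is a nonzero \emph{constant} is false. Take $n=3$ and $f(x_1)=x_1+x_2^2$, $f(x_2)=x_2+(x_1+x_2^2)x_3$, $f(x_3)=x_3$; this is an automorphism with $\Jac(f(x_1),f(x_2),f(x_3))=1$ and $\deg f(x_3)\leq 2$, so your scheme would specialize $x_1=c_1$ and apply Theorem \ref{1} to the pair $(\overline{f(x_2)},\overline{f(x_3)})$ in $\mathbb{C}[x_2,x_3]$. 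The three $2\times 2$ minors of the Jacobian matrix taken with respect to the columns $(x_2,x_3)$ are $2x_2(x_1+x_2^2)$, $2x_2$ and $1+2x_2x_3$, and by bilinearity the $(x_2,x_3)$-Jacobian of any two linear combinations of the $f(x_i)$ equals $p_{12}\cdot 2x_2(x_1+x_2^2)+p_{13}\cdot 2x_2+p_{23}\cdot(1+2x_2x_3)$ for Pl\"ucker coefficients $p_{ij}$; a nonzero constant term forces $p_{23}\neq 0$, but then the monomial $x_2x_3$ survives, before and after setting $x_1=c_1$. Hence no recombination and no choice of $c_1$ makes the sliced map satisfy the hypothesis of Theorem \ref{1}. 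The later steps are likewise unsupported: the polynomials expressing $x_{n-1},x_n$ in the sliced generators depend on the parameters $c$ and need not assemble into elements of $\Img(f)$, and in the proposed induction the only generator of degree $\leq 2$ has already been consumed, so the inductive hypothesis is unavailable for $\mathbb{C}[x_1,\ldots,x_{n-2}]$.

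For comparison: the paper does not prove Theorem \ref{second idea} either. The statement is explicitly conditional (``if the following `proof' has no errors''), and its proof environment contains only two tentative strategies --- working over the base ring $R=\mathbb{C}[x_3,\ldots,x_n]$, or generalizing the $\alpha$-machinery (Theorem \ref{alpha true}, Theorem \ref{alpha res thm}, Theorem \ref{p symmetric alpha}, Corollary \ref{p symmetric delta}) to $n>2$ variables --- each with acknowledged obstructions and neither carried out. Notably, the obstruction the paper records for its first idea, namely that the Jacobian of $f(x_{i_0})$ and $f(x_j)$ with respect to $x_1,x_2$ need not lie in $R^*=\mathbb{C}^*$, is essentially the same phenomenon that defeats your step (i). Your proposal is thus a third route, but it does not close the gap; the statement should be regarded as conjectural both in the paper and after your attempt.
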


\begin{proof}
\textbf{First idea of a proof:}
$f(x_{i_0})$ is of degree $\leq 2$, then 
$f(x_{i_0})=ax_1^2+bx_1x_2+cx_2^2+ux_1+vx_2+w$, where 
$a,b,c \in \mathbb{C}$,
$u,v,w \in \mathbb{C}[x_3,\ldots,x_n]$.
So now we wish to work with $R[x_1,x_2]$,
where $R:=\mathbb{C}[x_3,\ldots,x_n]$.
Notice that taking $\sqrt{a}, \sqrt{c}$ is still possible,
since here also $a,c \in \mathbb{C}$.

Caution: Working over $R=\mathbb{C}[x_3,\ldots,x_n]$ 
instead of working over a field of characteristic zero
may cause problems:
\begin{itemize}
\item The positive answer to the $\alpha$ Jacobian Conjecture, 
Theorem \ref{alpha true} ~\cite[Proposition 4.1]{vered valqui},
is over a field of characteristic zero, 
but maybe it is still valid over $R$.
\item C-M-W's theorem is originally over $\mathbb{C}$; 
we have seen that probably a more general field than $\mathbb{C}$ can be taken
(Lemmas \ref{cmw}, \ref{lemma alg clo char 0} and \ref{lemma char 0}). 
We have not yet checked if C-M-W's theorem is still valid over an integral domain 
(of characteristic zero). 
Maybe a way to overcome this is to consider the field of fractions of $R$,
denote it $Q(R)$, 
apply C-M-W's theorem over $Q(R)$ and then consider $R \subset Q(R)$
(similary to what we have done in Lemma \ref{cmw}).
\end{itemize}

However, even if those problems can be solved, considering 
$R[x_1,x_2]$ may not help, since the Jacobian of $f(x_{i_0})$
and $f(x_j)$, $j \neq i_0$, with respect to $x_1,x_2$,
does not necessarily belong to $R^*= \mathbb{C}^*$.

\textbf{Second idea of a proof:}
To prove that the $\alpha$ Jacobian Conjecture has a positive answer,
and to prove the $\alpha$ restriction theorem,
both for $k[x_1,\ldots,x_n]$, $n > 2$.
Then to check how to generalize Theorem \ref{p symmetric alpha}
and its corollary \ref{p symmetric delta} to $n>2$;
we will have to find the correct generalization of C-M-W's theorem
(Theorem \ref{p symmetric alpha} relies on C-M-W's theorem).
Perhaps we will have to demand that for $n-1$ indices 
$\subset \{1,\ldots,n\}$
the associate $n-1$ $f(x_j)$'s are all symmetric w.r.t. 
some involution having Jacobian $-1$, 
in case demanding for only one index will not be enough.

Only after having all these generalizations, we can see if it is possible to prove
the current theorem; perhaps we will have to demand in the current theorem
that for $n-1$ indices 
$\subset \{1,\ldots,n\}$ the associate $n-1$ $f(x_j)$'s are all of degree $\leq 2$.

(Remark: If in the generalized Theorem \ref{p symmetric alpha},
demanding for only one index will be enough,
namely demanding that there exists $1 \leq i_0 \leq n$ 
such that $f(x_{i_0})$ is symmetric w.r.t. some involution 
having Jacobian $-1$, 
then we may also have a similar conjecture to Conjecture \ref{p symmetric}).
\end{proof}
\subsection{Third idea}
We hope that it is possible to generalize our results here
and show that cubic homogeneous polynomial maps 
(also called Yagzhev maps) are invertible, 
see Bass, Connell and Wright ~\cite{bass connell wright},
Yagzhev ~\cite{yagzhev}, ~\cite[Theorem 6.3.1]{essen}.
It may be less difficult to show that cubic linear polynomial maps 
(also called Druzkowski maps) are invertible, 
see ~\cite{dru}, ~\cite[Theorem 6.3.2]{essen}.

\begin{proposition}\label{druz}
When $n=2$, Druzkowski maps are invertible.
\end{proposition}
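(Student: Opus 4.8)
The plan is to recall what a Druzkowski map is in dimension $2$ and then show directly that the only such maps with invertible Jacobian are the trivial (linear, hence obviously invertible) ones, so that invertibility follows at once. A Druzkowski map has the form $F=(x_1,\dots,x_n)+(L_1^3,\dots,L_n^3)$, where each $L_i=\sum_j a_{ij}x_j$ is linear and the matrix $A=(a_{ij})$ satisfies $A^2=0$ (this is the normal form guaranteeing that the Jacobian of the map is identically $1$; see ~\cite[Theorem 6.3.2]{essen}). When $n=2$ this says $A$ is a $2\times 2$ matrix with $A^2=0$, equivalently $\operatorname{tr}(A)=0$ and $\det(A)=0$.

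First I would classify such $A$. If $A=0$ then $F=(x,y)$ is the identity, which is invertible. If $A\neq 0$ but $A^2=0$, then $A$ has rank $1$ and, after an invertible linear change of coordinates, $A=\begin{pmatrix}0&1\\0&0\end{pmatrix}$ (up to scaling the nonzero entry, which can be absorbed). In the original coordinates this means there is a linear form $L$ and scalars such that $L_1=\lambda L$, $L_2=\mu L$ with $(\lambda,\mu)$ in the image of $A$, and moreover $L$ itself lies in the kernel of the relevant linear data, so that $L(L_1,L_2)$ — the composition — vanishes. Concretely one gets $F(x,y)=(x+\lambda L^3,\,y+\mu L^3)$ where $L$ is a linear form with $L(\lambda,\mu)=0$ in the appropriate sense, i.e. $\lambda L_x+\mu L_y$-type condition forcing $L$ to be proportional to $\mu x-\lambda y$ (after suitable identification).

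Then I would exhibit the inverse explicitly. With $F(x,y)=(x+\lambda L^3,\,y+\mu L^3)$ and $L$ chosen so that $L$ applied to the ``increment vector'' $(\lambda L^3,\mu L^3)$ is zero, one checks that $L\circ F=L$, hence $L(F(x,y))^3=L^3$, and therefore $G(u,v)=(u-\lambda\,(\text{that same }L\text{ in }u,v)^3,\,v-\mu(\dots)^3)$ is a two-sided inverse. Equivalently, I can simply invoke Lemma \ref{degree 1 case} or Theorem \ref{1}: after the linear coordinate change bringing $A$ to its normal form, the first coordinate of the map becomes $x$ (degree $1$) or $x$ plus a cube of a form not involving the relevant variable, and in all cases one coordinate has degree $\leq 1$ in an appropriate sense — more safely, observe that one of the two component polynomials, after the linear change, has degree $\leq 1$, or that the map is symmetric/skew with respect to one of $\beta,\gamma,\alpha$, so Corollary \ref{p symmetric delta} applies.

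The main obstacle is purely bookkeeping: writing the rank-$1$ nilpotent normal form carefully enough that the cancellation $L\circ F=L$ is manifest, and making sure the linear change of coordinates used to normalize $A$ is tracked consistently (so that the conclusion ``$F$ invertible'' transfers back). There is no deep difficulty once one knows $A^2=0$ forces either $A=0$ or $A$ rank $1$ nilpotent; in the latter case the cube term is a cube of a single linear form whose composition with $F$ is itself, killing all higher-order behavior and yielding an explicit polynomial inverse.
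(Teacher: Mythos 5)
Your opening step contains a genuine error. A Druzkowski map, as in ~\cite[Theorem 6.3.2]{essen} (which is what Proposition \ref{druz} refers to), is a cubic-linear map $F=X+((AX)_1^3,(AX)_2^3)$ whose Jacobian determinant is a nonzero constant; the condition $A^2=0$ is neither part of the definition nor equivalent to the Keller condition, even for $n=2$. Concretely: $L_1=2x-y$, $L_2=4x-2y$ gives $A^2=0$ but $\det\Jac(x+L_1^3,\,y+L_2^3)=1-18(2x-y)^2$, so this is not a Keller map; conversely $L_1=y-8x$, $L_2=2(y-8x)$ gives $\det\Jac\equiv 1$ while $\mathrm{tr}(A)=-6$ and $\det(A)=0$, so $A^2=(\mathrm{tr}A)A\neq 0$. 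Hence your case analysis ``$A=0$ or $A$ is rank-one nilpotent with $A^2=0$'' misses genuine Druzkowski maps and admits non-Keller ones. The correct derivation is to expand $\det\Jac F=1+3(a_{11}L_1^2+a_{22}L_2^2)+9\det(A)L_1^2L_2^2$ and read off $\det(A)L_1^2L_2^2=0$ and $a_{11}L_1^2+a_{22}L_2^2=0$; this forces $L_1,L_2$ to be proportional (or one of them zero), i.e. $F=\mathrm{id}+L^3\cdot(v_1,v_2)$ with $L(v_1,v_2)=0$ --- exactly the normal form you wrote down, but it must be obtained this way rather than from $A^2=0$.

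Once that is repaired, your mechanism is correct and complete: $L\circ F=L+L^3\,L(v_1,v_2)=L$, so $G=\mathrm{id}-L^3\cdot(v_1,v_2)$ is a two-sided polynomial inverse. This is a genuinely different and strictly more elementary argument than the paper's: the paper never looks at $d(y)$ or at the matrix $A$ at all, but composes $d$ on the left with a linear morphism turning $d(x)=x+L^3$ into $x+y^3$, then with $g_1(x)=x-y^3$, $g_1(y)=y$ to make the first coordinate equal to $x$ (Example \ref{example first idea}), and concludes by Corollary \ref{p symmetric delta} --- so it ultimately rests on the C-M-W theorem and the positive answer to the $\alpha$ Jacobian Conjecture, whereas your explicit inverse needs nothing beyond computing $\det\Jac F$. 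Your fallback remark also needs care: a linear change alone only brings $d(x)$ to $x+y^3$, which has degree $3$, so Lemma \ref{degree 1 case} or Corollary \ref{p symmetric delta} applies only after the further non-linear composition $x\mapsto x-y^3$, which is precisely the paper's route.
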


\begin{proof}
Let $d$ be a Druzkowski map.
It is easy to find invertible morphisms such that composing them 
an $d(x)$ yields a morphism which takes $x$ to
$\lambda x + \mu y^3$.
We have seen in Example \ref{example first idea}
that such a morphism is invertible.
\end{proof}

Proposition \ref{druz} is not enough, since we must prove that for 
\textit{all} $n \geq 2$, Druzkowski maps are invertible,
not just for $n=2$. 
However, we strongly suspect that it is possible to show,
with the help of the results brought in this paper, that 
cubic linear polynomial maps (Druzkowski maps) are invertible:

If the generalizations we suggested in the ``proof" of 
Theorem \ref{second idea} (second idea) are indeed possible,
namely, if it is possible to get a generalized version ($n>2$)
of the positive answer to the $\alpha$ Jacobian Conjecture,
the $\alpha$ restriction theorem, Theorem \ref{p symmetric alpha}
and its corollary \ref{p symmetric delta},
then we hope that it is not difficult to show
that Druzkowski maps are invertible;
this depends on what exactly a generalization of 
Corollary \ref{p symmetric delta} will be: \begin{itemize}
\item ``A good generalization": 
Invertibility of $f$ can be obtained by demanding
that there exists $1 \leq i_0 \leq n$ 
such that $f(x_{i_0})$ is symmetric w.r.t. some involution 
having Jacobian $-1$. 
Denote a given Druzkowski map by $d$.
Then it seems possible, in a similar way to what we have seen in the proof of 
Proposition \ref{druz}, 
to find invertible morphisms $g_1,\ldots,g_l$ such that
$(\prod{g_i}d)(x_1)$ is symmetric w.r.t. some involution having Jacobian $-1$.
(Of course, we can take any $(\prod{g_i}d)(x_i)$, not necessarily 
$(\prod{g_i}d)(x_1)$, since all $d(x_i)$ are of the same form).

\item ``Not as good as the above generalization": 
Invertibility of $f$ can be obtained by demanding
that there exist $n-1$ indices $\subset \{1,\ldots,n\}$
such that the associate $n-1$ $f(x_j)$'s are all symmetric w.r.t. some 
involution having Jacobian $-1$.
Now it seems less easy (but hopefully still possible) to find 
invertible morphisms $g_1,\ldots,g_l$ such that
for $n-1$ indices $j$,
the $n-1$ $(\prod{g_i}d)(x_j)$'s are symmetric w.r.t. some involution 
having Jacobian $-1$.
\end{itemize}

\subsection{Fourth idea:} We suggest to generalize Theorem \ref{2} to all $n > 2$. 
We wonder if one can find new proofs for known results
(for example ~\cite{essen hubbers}) based on Corollary \ref{p symmetric delta}.

\subsection{Fifth idea:} 
In the introduction we mentioned two great things in Wang's theorem.
The first thing seems more important than the second, 
at least from the point of view of trying to solve the Jacobian Conjecture, 
since it reminds us of the above mentioned Theorems of 
Bass, Connell and Wright ~\cite{bass connell wright},
Yagzhev ~\cite{yagzhev} and Druzkowski ~\cite{dru}. 
We wonder if one can adjust the proofs of those theorems to show that it 
suffices to prove the Jacobian Conjecture
for all $n \geq 2$ and all quadratic polynomial maps
(in order to do so, maybe one should consider a new notion of ``stable equivalence" 
which involves degree $2$ instead of degree $3$, 
see ~\cite[pages 119-124]{essen}).

\bibliographystyle{plain}

\end{document}